\newtheorem{theorem}{Theorem}[section]
\newtheorem{lemma}[theorem]{Lemma}
\newtheorem{proposition}{Proposition}
\theoremstyle{definition}
\newtheorem{definition}[theorem]{Definition}
\newtheorem{remark}{Remark}
\newcommand{\intersection}{\mathop{\bigcap}}
\newcommand{\union}{\mathop{\bigcup}}
\newcommand{\del}{\partial}
\newcommand{\norm}[1]{\left\lVert#1\right\rVert}
\newcommand{\Wloc}{W_{\mathrm{loc}}}
\newcommand{\sgn}{\mathrm{sgn}}
\newcommand{\N}{\mathbb N}
\newcommand{\Z}{\mathbb Z}
\newcommand{\R}{\mathbb R}
\let\phi\varphi
\let\epsilon\varepsilon
\let\tilde\widetilde
\let\hat\widehat
\title[SRB measures of singular hyperbolic attractors] 
      {SRB measures of singular hyperbolic attractors}
\author[Dominic Veconi]{}
\subjclass{Primary: 37C05, 37C40, 37D05, 37D20, 37D45; Secondary: 37C10, 37C70, 37D35.}
 \keywords{Singular hyperbolic maps, SRB measures, strange attractors, hyperbolic sets, physical measures.}
 \email{dkv5049@psu.edu}
\begin{document}
\maketitle

\centerline{\scshape Dominic Veconi$^*$}
\medskip
{\footnotesize
 \centerline{Department of Mathematics}
   \centerline{The Pennsylvania State University}
   \centerline{University Park, PA 16802, USA}
} 

%

\bigskip

 \centerline{(Communicated by the associate editor name)}

\begin{abstract}
	It is known that hyperbolic maps admitting singularities have at most countably many ergodic Sinai-Ruelle-Bowen (SRB) measures. These maps include the Belykh attractor, the geometric Lorenz attractor, and more general Lorenz-type systems. In this paper, we establish easily verifiable sufficient conditions guaranteeing that the number of ergodic SRB measures is at most finite, and provide examples and nonexamples showing that the conditions are necessary in general. 
\end{abstract}

\section{Introduction}

One primary question in smooth ergodic theory is the existence of ``physical measures'' for a smooth dynamical system. Given a compact Riemannian manifold $M$ and a smooth map $f : U \to M$, $U \subseteq M$ open, a \emph{physical measure} is one in which the Birkhoff averages of continuous functions are constant on a set of positive measure. In other words, a probability measure $\mu$ is a \emph{physical measure} if
\[
m\bigg\{ x \in U : \lim_{n \to +\infty} \frac 1 n \sum_{k=0}^{n-1} \left( \phi \circ f^k\right)(x) = \int_U \phi \, d\mu \quad \forall \phi \in C^0(U)\bigg\} > 0,
\]
where $m$ is the Riemannian volume. Among the most significant physical measures are the \emph{Sinai-Ruelle-Bowen (SRB) measures}. These are invariant measures for hyperbolic dynamical systems that have conditional measures on unstable leaves that are absolutely continuous with respect to the Riemannian leaf volume. For uniformly hyperbolic dynamical systems (such as transitive Anosov diffeomorphisms and attractors of Axiom A systems), there is a unique SRB measure \cite{Bowen}, and the existence of SRB measures has been established for several classes of nonuniformly hyperbolic dynamical systems \cite{Hu99,PSZ-Katok,me-PAD} and partially hyperbolic dynamical systems \cite{ABV, BV}. It was further shown in \cite{RH2TU} that if $M$ is a compact Riemannian 2-manifold and $f : M \to M$ is a hyperbolic diffeomorphism admitting an SRB measure, then this SRB measure is unique. 

Many dynamical systems in engineering and natural sciences exhibit ``chaotic'' behavior: their trajectories appear disordered and they are highly sensitive to initial data. The simplest mathematical examples of such systems are uniformly hyperbolic and uniformly expanding, and so hyperbolic dynamical systems have been at the forefront of smooth ergodic theory since at least the 1960s. However, most stochastic dynamical systems arising from physical and natural phenomena are not uniformly hyperbolic. In these instances, uniqueness results for uniformly hyperbolic dynamical systems and surface maps (such as those described in \cite{RH2TU}) no longer apply. Examples of such dynamical systems include the Lorenz attractor model of atmospheric convection, the associated geometric Lorenz attractor (described in more detail below), and the Belykh attractor of phase synchronization theory \cite{APPV, PeGHA, Sat10}. The latter two are maps of the unit square that admit highly complex limit sets, so that the resulting maps are not invariant under Lebesgue volume and may not \emph{a priori} admit a unique SRB measure. 

Although our results concern discrete singular hyperbolic maps, historically many results about singular hyperbolic attractors come from investigations into hyperbolic flows, the most famous being the flow generated by the Lorenz equations. In \cite{KL}, J. Kaplan and J. Yorke used a Poincar\'e return map to study the dynamical behavior of the Lorenz attractor, such as the parameters for which periodic points are dense. This Poincar\'e map was later reformulated as the \emph{geometric Lorenz attractor}, which is a simplified discrete model of the Poincar\'e map of the original Lorenz flow. The more general family of discrete \emph{Lorenz-type maps} was introduced in \cite{APLorenz}. In the years that followed, the Lorenz system and related hyperbolic flows have led to active research in singular hyperbolic attractors (see e.g. \cite{APLorenz, APPV, PeGHA, Sat92, Sat10}, and others). There is also a large body of work on singular hyperbolic and sectional-hyperbolic flows more generally. In \cite{Sat10}, it is shown that singular hyperbolic flows admit finitely many ergodic physical measures; more recently, it was shown in \cite{Asectional} that flows of H\"older-$C^1$ vector fields admitting a sectional-hyperbolic attracting set admit finitely many ergodic SRB measures. The proof in \cite{Asectional} also relies on Poincar\'e return maps, and so these results extend to discrete singular hyperbolic maps arising as Poincar\'e maps of hyperbolic flows. For a detailed discussion of the ergodic properties of hyperbolic flows and their attractors, see \cite{APbook}.

In this paper, we consider the class of discrete singular hyperbolic dynamical systems. These are hyperbolic maps $f : K \setminus N \to K$, where $K \subset M$ is a precompact open subset of a Riemannian manifold $M$, and $N \subset K$ is a closed subset of singularities on which $f$ fails to be continuous and/or differentiable. The map $f$ is uniformly hyperbolic on the non-invariant set $K \setminus N$, but behaves more similarly to the non-uniformly hyperbolic setting on an invariant set that consists of trajectories passing nearby the the singular set $N$ with a prescribed rate. Our setting includes systems that are derived from Poincar\'e maps of hyperbolic flows, such as the geometric Lorenz attractor, but also includes singular hyperbolic dynamical systems that do not arise from flows, such as the Lozi map \cite{Lozi}. In \cite{PeGHA}, it was shown that the attractors admitted by singular hyperbolic maps support at most countably many ergodic SRB measures. In \cite{Sat92}, conditions were given under which a singular hyperbolic attractor admits at most finitely many ergodic SRB measures. We provide an alternative proof of this result, with somewhat different conditions that are easy to verify. Namely, if the singular set is a disjoint union of finitely many embedded submanifolds that transversally intersect unstable leaves, and if the image of neighborhoods of the singular set remain separated from the singular set under the dynamics for sufficiently long but finite time (conditions (SH3), (SH6), and (SH7)), then there are at most finitely SRB measures. 


Although most examples of singular hyperbolic attractors in the literature satisfy conditions (SH3) - (SH7) (see the examples in \cite{APLorenz, APPV,Belykh,Lozi,PeGHA}), there exist singular hyperbolic attractors that do not satisfy these conditions and admit infinitely many ergodic components. For this reason, these conditions are necessary for our statement to be true in full generality. For example, one can construct a family of Lorenz-type attractors whose singular sets have infinitely many components, and these examples admit countably many SRB measures, but these maps are not topologically transitive (see Section \ref{infinite-example}). 


Once the existence of ergodic SRB measures has been established, a natural question to ask is when the SRB measure is unique. In \cite{RH2TU}, it is shown that in the case of a $C^{1+\alpha}$ diffeomorphism $f$ of a compact surface, if $f$ is topologically transitive, then $f$ admits at most one SRB measure. A similar result may be proven for singular hyperbolic attractors, provided a certain regularity condition on the stable foliation (Theorem \ref{SRB-uniqueness}, also \cite{PeGHA}). The regularity condition needed is \emph{local continuity}, which roughly means that the smooth functions $E^s(x) \to M$ defining the local stable leaf $\Wloc^s(x)$ vary continuously with $x \in K \setminus N$, where $E^s(x) \subset T_x M$ is the stable subspace at $x$. In this paper, we show that a singular hyperbolic map for which the stable foliation is locally continuous admits a unique SRB measure if and only if the map is topologically transitive. 


This paper is structured as follows. Section \ref{Preliminaries} is devoted to preliminary constructions and definitions needed to discuss singular hyperbolic dynamical systems. Our main result is stated and proven in Section \ref{Main result and proof}. Section \ref{section-examples} is spent discussing examples of dynamical systems satisfying the hypotheses of our main result, as well as examples of systems that fail these hypotheses and that admit infinitey many SRB measures.

\section{Preliminaries}\label{Preliminaries}

We begin by defining singular hyperbolic attractors, and discuss some of their major properties. We consider a Riemannian manifold $M$,  an open, bounded, connected subset $K \subset M$ with compact closure, and a closed subset $N \subset K$. We further consider a map $f : K \setminus N \to K$ satisfying: 
\begin{enumerate}[label=(SH\arabic*),leftmargin=0.5in]
	\item $f$ is a $C^2$ diffeomorphism from $K \setminus N$ to $f(K \setminus N)$.
\end{enumerate}
We further define $N^+ := N \cup \del K$ as the \emph{discontinuity set} for $f$ (on which the function $f$ is discontinuous), and further define
\[
N^- = \left\{y \in K : \textrm{There are } z \in N^+ \textrm{ and } z_n \in K\setminus N^+ \textrm{ s.t. } z_n \to z \textrm{ and } f(z_n) \to y\right\}.
\]
The set $N^-$ is referred to as the \emph{discontinuity set} for $f^{-1}$. We further assume the map $f$ satisfies:
\begin{enumerate}[label=(SH\arabic*),leftmargin=0.5in]
	\setcounter{enumi}{1}
	\item There exist $C_i>0$ and $\alpha_i \geq 0$, with $i = 1,2$, such that
	\begin{align*}
	\norm{d^2 f_x} &\leq C_1 \rho(x, N^+)^{-\alpha_1} \quad \textrm{for } x \in K \setminus N, \\
	\norm{d^2 f_x^{-1}} &\leq C_2 \rho(x, N^-)^{-\alpha_2} \quad \textrm{for }x \in f(K \setminus N)
	\end{align*}
	where $\rho$ is the Riemannian distance in $M$.
\end{enumerate}

Define the set $K^+$ by
\begin{align*}
K^+ = \intersection_{n=0}^\infty \left(K \setminus f^{-n}(N^+)\right) = \left\{ x \in K : f^n(x) \not\in N^+ \textrm{ for all } n \geq 0\right\},
\end{align*}
so that $K^+$ is the largest forward-invariant set on which $f$ is continuous. Further, define
\[
D = \intersection_{n=0}^\infty f^n(K^+) \quad \textrm{and} \quad \Lambda = \overline D.
\]
We say $\Lambda$ is the \emph{attractor} for $f$. 

\begin{proposition}\cite{PeGHA}\label{PeGHA Prop 1}
	We have $D = \Lambda \setminus \union_{n \in \Z} f^n(N^+)$. Furthermore, $f$ and $f^{-1}$ are well-defined on $D$, and $f(D) = D$ and $f^{-1}(D) = D$. 
\end{proposition}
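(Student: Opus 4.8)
The plan is to chase the definitions of $D$, $K^+$, and $\Lambda$ directly, establishing the set-theoretic identity first and then using it to read off the invariance. Recall $K^+ = \bigcap_{n\geq 0} f^{-n}(K \setminus N^+)$ is the largest forward-invariant set on which every forward iterate of $f$ is defined and continuous, $D = \bigcap_{n \geq 0} f^n(K^+)$, and $\Lambda = \overline{D}$. First I would observe that on $K^+$ all positive iterates $f^n$ make sense, and that for a point $x \in D$ we may write $x = f^n(y_n)$ with $y_n \in K^+$ for every $n \geq 0$; since $f$ is injective on $K \setminus N$ (by (SH1)) and $f(K^+) \subseteq K^+$, the preimages $y_n$ are consistent, i.e. $y_n = f^{-n}(x) \in K^+$, so in fact $f^{-n}(x)$ is well-defined and lies in $K^+$ for all $n \geq 0$. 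Combined with $f^n(x) \in K^+$ for $n \geq 0$ (which is immediate since $x \in f^m(K^+)$ for all $m$ forces $f^n(x) \in f^{m+n}(K^+)$... more cleanly, $x \in K^+$ so $f^n(x)\in K^+$), this shows the full orbit $\{f^n(x) : n \in \Z\}$ is defined and contained in $K^+ \subseteq D$; hence $f(D) \subseteq D$, $f^{-1}(D) \subseteq D$, and by applying $f$ and $f^{-1}$ one gets equality $f(D) = D = f^{-1}(D)$. This also shows $f, f^{-1}$ are well-defined on $D$.

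For the identity $D = \Lambda \setminus \bigcup_{n \in \Z} f^n(N^+)$: the inclusion $D \subseteq \Lambda$ is by definition of closure, and $D$ avoids $\bigcup_n f^n(N^+)$ because, as just argued, the entire bi-infinite orbit of any $x \in D$ lies in $K^+$, which by construction is disjoint from $N^+$ (and hence $f^n(x) \notin N^+$, equivalently $x \notin f^n(N^+)$, for every $n \in \Z$). So $D \subseteq \Lambda \setminus \bigcup_{n\in\Z} f^n(N^+)$. For the reverse inclusion, take $x \in \Lambda$ with $x \notin f^n(N^+)$ for all $n \in \Z$; I need to produce $y_n \in K^+$ with $f^n(y_n) = x$, i.e. show $x \in \bigcap_{n \geq 0} f^n(K^+)$. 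Here one uses that $x \in \Lambda = \overline{D}$ means $x$ is a limit of points $x^{(k)} \in D$; each $x^{(k)}$ has its full orbit in $K^+$, and the hypothesis that $x$ stays off every $f^n(N^+)$ guarantees (via continuity of $f$ and $f^{-1}$ on the relevant domains, and the fact that $N^+$ is closed so its iterated preimages are closed) that the finite orbit segments $f^{-n}(x), \dots, f^n(x)$ are all defined and avoid $N^+$; letting $n \to \infty$ places $x$ in $K^+$ and then $f^{-n}(x) \in K^+$ for all $n$, giving $x \in D$.

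The main obstacle I anticipate is the reverse inclusion in the set identity — specifically, justifying that a point $x \in \Lambda$ whose full orbit avoids $\bigcup_{n\in\Z}f^n(N^+)$ actually has all its iterates \emph{defined} and lying in $K^+$, rather than merely "formally" avoiding the singular set. The subtlety is that $x$ is only approached by points of $D$; one must argue that the condition $x \notin f^{-n}(N^+)$ (i.e. $\rho(f^k(x), N^+) > 0$ along the orbit) propagates through the limit, using that $N^+ = N \cup \partial K$ is closed and $f$ is an open continuous map away from $N$, so that a small neighborhood of $x$ maps forward and backward coherently and the approximating orbits of $x^{(k)} \in D$ converge to a genuine orbit of $x$. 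Once this is in hand, the invariance statements $f(D) = D$ and $f^{-1}(D) = D$ follow formally, so I would present the set identity as the technical heart and the rest as bookkeeping.
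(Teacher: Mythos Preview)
The paper does not give its own proof of this proposition: it is quoted from \cite{PeGHA} and left unproved in the present text. So there is nothing in the paper to compare your argument against beyond the citation.

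On the substance of your write-up: the invariance part and the inclusion $D \subseteq \Lambda \setminus \bigcup_{n\in\Z} f^n(N^+)$ are handled correctly by unwinding the definitions of $K^+$ and $D$, exactly as you do. The point you flag as the obstacle --- the reverse inclusion --- is indeed where the content lies, and your sketch is the right shape but is not yet a proof. Two things would need to be made precise. First, the expression $f^n(N^+)$ for $n>0$ is formally empty (since $f$ is undefined on $N^+$), so the hypothesis ``$x \notin f^n(N^+)$ for all $n\in\Z$'' gives you no direct information about backward iterates; what actually controls the backward orbit is the set $N^-$ introduced just after (SH1), and your limiting argument implicitly uses that if $f^{-1}(x^{(k)}) \to y$ with $y\in N^+$ then $x=\lim x^{(k)}$ lands in $N^-$ by definition. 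You should make that step explicit. Second, once you know a subsequential limit $y$ of $f^{-1}(x^{(k)})$ lies in $K\setminus N^+$, injectivity of $f$ on $K\setminus N$ gives uniqueness of $y$ and hence a well-defined $f^{-1}(x)$; you then need to check $y\in K^+$ (which follows since $f(y)=x\in K^+$) and $y\in\Lambda$ (since $y$ is a limit of points in $D$), after which you can iterate. Filling in these two steps turns your sketch into a complete argument.
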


Given $z \in M$, $\alpha > 0$, and a subspace $P \subset T_zM$, we denote the cone at $z$ around $P$ with angle $\alpha$ by
\[
C(z,\alpha,P) = \left\{v \in T_z M : \angle(v,P) := \inf_{w \in P} \angle(v,w) \leq \alpha \right\}.
\]

\begin{definition}\label{main-def}
	The map $f : K\setminus N \to K$ is \emph{singular hyperbolic} if there is $C > 0$, $\lambda > 1$, a function $\alpha : D \to \R$, and two distributions $P^s, P^u$ on $K \setminus N^+$ of dimensions $\dim P^s = p$, $\dim P^u = q = n-p$ (with $n = \dim M$), such that the cones $C^s(z) = C(z,\alpha(z), P^s_z)$ and $C^u(z) = C(z,\alpha(z), P^u_z)$, for $z \in K \setminus N$, satisfy the following conditions: 
	\begin{enumerate}[label=(\alph*)]
		\item The angle between $C^s(z)$ and $C^u(z)$ is uniformly bounded below over $K \setminus N^+$, and in particular, $C^s(z) \cap C^u(z) = 0$; 
		\item $df_z(C^u(z)) \subset C^u(f(z))$ for $z \in K\setminus N^+$, and $df^{-1}_z(C^s(z)) \subset C^s(f^{-1}(z))$ for $z \in f(K \setminus N^+)$; 
		\item for any $n > 0$, we have:
		\begin{align*}
		|df_z^n v| &\geq C\lambda^{n}|v| \quad \textrm{for } z \in K^+, v \in C^u(z); \\
		|df_z^{-n}| &\geq C\lambda^{n}|v| \quad \textrm{for } z \in f^n(K^+), v \in C^s(z).
		\end{align*}
	\end{enumerate}
	In this instance, the set $\Lambda$ defined above is called a \emph{singular hyperbolic attractor}. 
\end{definition}
Define the following subsets of $T_zM$ for $z \in D$: 
\[
E^s_z = \intersection_{n=0}^\infty df^{-n}_{f^n(z)} C^s(f^n(z)) \quad \textrm{and} \quad E^u_z = \intersection_{n = 0}^\infty df^n_{f^{-n}(z))} C^u(f^{-n}(z)). 
\]

\begin{proposition}\cite{PeGHA} The sets $E^s_z$ and $E^u_z$ are subspaces of $T_zM$, called the \emph{stable} and \emph{unstable subspaces} at $z$ respectively. They satisfy the following properties: 
	\begin{enumerate}[label=(\alph*)]
		\item the dimensions of these subspaces are the same as the respective subspaces $P^s_z$ and $P^u_z$ around which the cones $C^s(z)$ and $C^u(z)$ are centered. That is, $\dim E^s_z = \dim P^s_z = p$ and $\dim E^u_z = \dim P^u_z = q = n-p$; 
		\item $T_z M = E^s_z \oplus E^u_z$; 
		\item the angle between $E^s_z$ and $E^u_z$ is bounded below uniformly over $D$; 
		\item for any $n \geq 0$ and $z \in D$, we have
		\begin{align*}
		|df^n_z v| &\leq C\lambda^{-n}|v| \quad \textrm{for } v \in E^s(z), \\
		|df^{-n}_z v| &\leq C\lambda^{-n}|v|\quad \textrm{for } v \in E^u(z). 
		\end{align*}
	\end{enumerate}
\end{proposition}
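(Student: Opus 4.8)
My plan is to treat this as an instance of the classical cone criterion: a strictly invariant cone family with uniform expansion produces an invariant splitting. I would prove the statement for $E^u$ in detail and obtain the statement for $E^s$ by applying the same argument to $f^{-1}$; this is legitimate because, by Proposition \ref{PeGHA Prop 1}, $f$ and $f^{-1}$ are well defined on $D$ and $f(D)=D$, so for $z\in D$ the whole orbit $\{f^n(z)\}_{n\in\Z}$ lies in $D\subseteq K\setminus N$, and in fact $D\subseteq K^+$, $f^{-n}(z)\in K^+$, and $f^{n}(z)\in f^{n}(K^+)$ for every $n\ge 0$, so that condition (c) of Definition \ref{main-def} is available along the whole orbit. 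As a preliminary step, I note that condition (a) of Definition \ref{main-def} forces $\alpha_0:=\sup_z\alpha(z)<\pi/2$; since $P^s_z\subseteq C^s(z)$, $P^u_z\subseteq C^u(z)$, and $C^s(z)\cap C^u(z)=0$, this gives $P^s_z\cap P^u_z=0$, and a dimension count ($p+q=n$) yields $T_zM=P^s_z\oplus P^u_z$ with the angle between the summands uniformly bounded below.

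The substantive assertion is the one in item (a): that $E^u_z$ is a $q$-dimensional subspace (and, symmetrically, $E^s_z$ a $p$-dimensional one). Fix $z\in D$, put $x_n=f^{-n}(z)$, and set $\mathcal C_n=df^n_{x_n}C^u(x_n)\subseteq T_zM$, so that $E^u_z=\bigcap_{n\ge0}\mathcal C_n$. Cone invariance (Definition \ref{main-def}(b)) gives $df_{x_{n+1}}C^u(x_{n+1})\subseteq C^u(x_n)$, hence $\mathcal C_{n+1}\subseteq\mathcal C_n$: the $\mathcal C_n$ are a nested family of closed cones, each containing the $q$-plane $V_n:=df^n_{x_n}P^u_{x_n}$. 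Using $T_xM=P^u_x\oplus P^s_x$, every $q$-dimensional subspace contained in $C^u(x)$ is the graph of a linear map $P^u_x\to P^s_x$ of operator norm $\le\ell_0=\ell_0(\alpha_0)$, and $df_x$ carries such graphs over $x$ to such graphs over $f(x)$. The heart of the proof is that the induced graph transform sends $\{L:\norm L\le\ell_0\}$ into $\{L:\norm L\le\theta\ell_0\}$ and is Lipschitz with constant $\le\theta$, for some uniform $\theta\in(0,1)$ (possibly only after a fixed number of iterates): this is exactly where condition (c) is used, since its two expansion inequalities, together with cone invariance, force a uniform domination along $D$. Granting this, the $V_n$ (iterated graph transforms of the $P^u_{x_n}$) form a Cauchy sequence in the compact Grassmannian $G(q,T_zM)$ converging to a $q$-subspace $E$, and at the same time every $q$-subspace inside $\mathcal C_n$ lies within $\theta^n\ell_0$ of $V_n$ in graph norm over $z$; combining the two, $\bigcap_n\mathcal C_n=E$, so $E^u_z$ is a $q$-dimensional subspace.

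Items (b) and (c) are then quick. Setting $n=0$ in the definitions shows $E^s_z\subseteq C^s(z)$ and $E^u_z\subseteq C^u(z)$, hence $E^s_z\cap E^u_z\subseteq C^s(z)\cap C^u(z)=0$ by Definition \ref{main-def}(a), and since $\dim E^s_z+\dim E^u_z=p+q=n$ we get $T_zM=E^s_z\oplus E^u_z$, proving (b). For (c), the angle between $E^s_z$ and $E^u_z$ is at least the angle between $C^s(z)$ and $C^u(z)$, which is bounded below uniformly over $K\setminus N^+\supseteq D$ by Definition \ref{main-def}(a).

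For item (d), I would first observe that $E^s_z$ and $E^u_z$ are $df$-invariant, which follows by reindexing the intersections in their definitions together with $f(D)=D$: $df_z(E^u_z)=E^u_{f(z)}$ and $df_z(E^s_z)=E^s_{f(z)}$. Then, for $z\in D$ and $v\in E^s_z$, put $w=df^n_z v\in E^s_{f^n(z)}\subseteq C^s(f^n(z))$, so that $v=df^{-n}_{f^n(z)}w$; since $f^n(z)\in f^n(K^+)$, condition (c) gives $|v|\ge C\lambda^{n}|w|$, i.e. $|df^n_z v|\le C^{-1}\lambda^{-n}|v|$. Symmetrically, for $v\in E^u_z$ set $u=df^{-n}_z v\in E^u_{f^{-n}(z)}\subseteq C^u(f^{-n}(z))$; then $df^n_{f^{-n}(z)}u=v$ and $f^{-n}(z)\in K^+$, so condition (c) gives $|v|\ge C\lambda^{n}|u|$, i.e. $|df^{-n}_z v|\le C^{-1}\lambda^{-n}|v|$. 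Replacing $C$ by $\max(C,C^{-1})$ gives the stated form. The one genuine obstacle is the graph-transform contraction in item (a): its difficulty lies entirely in extracting a uniform contraction factor $\theta<1$ from condition (c) — the one-step estimate need not contract, as the constant $C$ there may be small, so one works over a fixed block of iterates — and it is precisely the uniformity of $\theta$ over $D$ that collapses the nested cones onto a single subspace rather than onto a genuinely fatter cone.
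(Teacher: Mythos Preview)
The paper does not prove this proposition at all: it is stated with the citation \cite{PeGHA} and no proof is given, the text simply moving on to the next paragraph. So there is nothing to compare against except the original source.

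Your approach is the standard cone-criterion argument and is the expected one; the overall architecture (nested cone images, graph transform over the splitting $P^u\oplus P^s$, contraction forcing the intersection to be a single $q$-plane, then (b)--(d) as easy consequences) is correct. Two small points. First, your claim that Definition~\ref{main-def}(a) forces $\sup_z\alpha(z)<\pi/2$ is not quite what (a) says: (a) gives a uniform lower bound on the angle between the two cones and that their intersection is $\{0\}$, and it is this uniform separation (not a bound on $\alpha$ itself) that you actually use to realize $q$-planes inside $C^u$ as graphs over $P^u$ with uniformly bounded slope. Second, and you flag this yourself, the genuine work is establishing the uniform contraction factor $\theta<1$ for the graph transform from condition~(c); since (c) only controls vectors inside the cones, one has to use both inequalities in (c) together with the uniform angle bound to control the $P^s$-component of $df$ restricted to $C^u$, and typically one must pass to a fixed iterate to absorb the constant $C$. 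This is exactly the step that is not automatic, and your write-up treats it as a black box; if you were to submit this as a full proof you would need to supply that estimate.
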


The distributions $E^s$ and $E^u$ on $D$ thus form uniformly hyperbolic structure with singularities. In particular, they are the tangent spaces of stable and unstable foliations on $D$. To rigorously characterize the leaves of these foliations, we need to define the subsets on which stable and unstable manifolds may be defined. 

For arbitrary $\epsilon > 0$ and $l \in \N$, we denote: 
\begin{align*}
\hat D_{\epsilon, l}^+ &= \left\{z \in K^+ : \rho\left(f^n(z), N^+\right) \geq l^{-1}e^{-\epsilon n}, \: n \geq 0 \right\} ; \\
D^-_{\epsilon,l} &= \left\{ z \in \Lambda : \rho\left(f^{-n}(z), N^-\right) \geq l^{-1}e^{-\epsilon n}, n \geq 0 \right\}; \\
D^+_{\epsilon, l} &= \hat D_{\epsilon, l}^+\cap\Lambda; \\
D^0_{\epsilon, l} &= D^-_{\epsilon, l} \cap D^+_{\epsilon, l}; \\ 
D_{\epsilon}^\pm &= \union_{l \geq 1} D_{\epsilon, l}^\pm; \\
D_\epsilon^0 &= \union_{l\geq 1} D_{\epsilon,l}^0.
\end{align*}
We note that $\hat D^+_{\epsilon, l}$, $D^\pm_{\epsilon, l}$, and $D^0_{\epsilon, l}$ are closed, and hence compact. Also observe that $D^0_\epsilon = D_\epsilon^+\cap D_\epsilon^- \subset D$ for $\epsilon > 0$, and $D_\epsilon^0$ is invariant under both $f$ and $f^{-1}$. Further, $D_\epsilon^+$ and $D_\epsilon^-$ are invariant under $f$ and under $f^{-1}$ respectively. 

For the proof of the following proposition, see the discussion in Sections 1.5 and 2.1 of \cite{PeGHA}. 

\newcommand{\loc}{\mathrm{loc}}

\begin{proposition}\label{(un)stable manifolds}
	There exists $\epsilon > 0$ and such that: 
	\begin{enumerate}[label=(\alph*)]
		\item for $z \in D^+_\epsilon$, there is an embedded (possibly disconnected) submanifold $W_{\mathrm{loc}}^s(z)$ of dimension $p = \dim E^s_z$ for which $T_z W^s_{\mathrm{loc}}(z) = E^s_z$;
		\item for $z \in D^-_\epsilon$, there is an embedded (possibly disconnected) submanifold $W_{\mathrm{loc}}^u(z)$ of dimension $q = n-p = \dim E^u_z$ for which $T_z \Wloc^u(z) = E^u_z$. 
	\end{enumerate}
	Furthermore, define $B^s_z(y,r)$ to be the ball in $W^s_{\mathrm{loc}}(z)$ of radius $r$ centered at $y \in W_\loc^s(z)$, where the distance is the induced distance $\rho^s$ on $W_\loc^s(z)$. Define $B^u_z(y,r)$ and $\rho^u$ similarly. Then there is an $\alpha$ with $\lambda^{-1} < \alpha < 1$ such that for $r > 0$, there is a constant $C = C(r)$ such that: 
	\begin{enumerate}[label=(\alph*)]
		\setcounter{enumi}{2}
		\item for $z \in D^+_\epsilon$, $y \in W_\loc^s(z)$, $w \in B^s_z(y,r)$, and $n \geq 0$, we have 
		\[
		\rho^s(f^n(y), f^n(w)) \leq C \alpha^n \rho^s(y,w); 
		\]
		\item for $z \in D^-_\epsilon$, $y \in W_\loc^u(z)$, $w \in B^u_z(y,r)$, and $n \leq 0$, we have
		\[
		\rho^u(f^n(y), f^n(w)) \leq C \alpha^n \rho^u(y,w). 
		\]
	\end{enumerate}
	Additionally, for $z \in D_{\epsilon,l}^-$, let $B(z,\delta)$ denote the ball of $\rho$-radius $\delta$ centered at $z$. Then there are $\delta_i = \delta_i(z) > 0$, $i=1,2,3$, with $\delta_1 > \delta_2 > \delta_3$, so that for $w \in B(z,\delta_3)$, the intersection $B^s_z(z,\delta_1) \cap W_\loc^u(w)$ is nonempty and contains exactly one point, denoted $[w,z]$; and furthermore, $B^u_{w}([w,z],\delta_2) \subset W_\loc^u(w)$. 
\end{proposition}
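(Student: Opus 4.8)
The statement to prove, Proposition~\ref{(un)stable manifolds}, is the standard "stable/unstable manifold theorem with singularities," and the natural approach is to adapt the classical graph-transform (or Hadamard–Perron) construction of Pesin theory to the present singular-hyperbolic setting, using the Lyapunov-type estimates that are already built into the definition. Here is the plan.

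\textbf{Setup and local charts.} For a fixed small $\epsilon > 0$ and each $z \in D^0_{\epsilon,l}$, I would first pass to adapted coordinates: using the splitting $T_zM = E^s_z \oplus E^u_z$ (valid on $D$ by the previous proposition) together with the uniform bound on the angle between $E^s$ and $E^u$, introduce a Lyapunov chart $\Phi_z$ in which $df_z$ is block-diagonal up to a small error, and in which the $C^2$-norm of $f$ expressed in the chart is controlled. The crucial point is condition (SH2): the bound $\norm{d^2 f_x} \le C_1 \rho(x,N^+)^{-\alpha_1}$ combined with the definition of $D^+_{\epsilon,l}$ (which forces $\rho(f^n(z),N^+) \ge l^{-1} e^{-\epsilon n}$) gives that along the orbit of $z$ the second derivative grows at most at an exponential rate $e^{\alpha_1 \epsilon n}$; choosing $\epsilon$ small relative to $\log\lambda$ makes this growth negligible compared with the hyperbolic rates in Definition~\ref{main-def}(c). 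This is exactly the mechanism by which Pesin-theoretic (nonuniform) arguments are made to work here despite uniform hyperbolicity of the cone field.

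\textbf{Construction of the manifolds.} With charts in hand, I would construct $\Wloc^s(z)$ for $z \in D^+_\epsilon$ as a graph over $E^s_z$: consider the space of Lipschitz (indeed $C^1$) graphs $\psi : B_{E^s_z}(0,r_z) \to E^u_z$ with $\psi(0)=0$ and Lipschitz constant $\le 1$, define the graph transform induced by $f$ (using forward invariance of $D^+_\epsilon$ under $f$ and the cone condition (b) to see the image graph is again a graph over the stable direction), and show this operator is a contraction in the $C^0$ (sup) metric on graphs; the fixed point is $\Wloc^s(z)$, and $C^1$ (in fact $C^{1+\text{H\"older}}$, via the $C^2$ bound) regularity of the leaf follows from a standard cone-bootstrapping argument, giving $T_z\Wloc^s(z) = E^s_z$. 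The same argument applied to $f^{-1}$ on $D^-_\epsilon$ — using the corresponding bound $\norm{d^2f^{-1}_x}\le C_2\rho(x,N^-)^{-\alpha_2}$ and the definition of $D^-_{\epsilon,l}$ — yields $\Wloc^u(z)$. The contraction estimates (c) and (d) are then read off directly from the Lyapunov-chart contraction rates: any two points of $\Wloc^s(z)$ lie on a graph tangent to a narrow cone around $E^s$, so $df^n$ contracts $\rho^s$-distances by $C\alpha^n$, where $\alpha$ can be taken in $(\lambda^{-1},1)$ after absorbing the $e^{\epsilon n}$ chart distortion. The constant $C = C(r)$ depends on the chart size, which depends on $l$ and hence on how far into $D^-_\epsilon = \bigcup_l D^-_{\epsilon,l}$ the point $z$ sits, but for a fixed radius $r$ the estimate is uniform on each $D^\pm_{\epsilon,l}$.

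\textbf{The local product structure.} For the last assertion, fix $z \in D^-_{\epsilon,l}$. By part (a)/(b) we have $\Wloc^u(z)$ of dimension $q$; choosing $\delta_1$ smaller than the radius of $\Wloc^s(z)$ (note $z \in D^-_{\epsilon,l}$ need not lie in $D^+_\epsilon$, but $\Wloc^s(z)$ may still be taken to exist on a definite scale along $D^-_{\epsilon,l}$ — here one uses that $D^0_{\epsilon,l}$ is the relevant full-measure carrier, or restricts attention to $z\in D^0_{\epsilon,l}$), the transversality $T_z\Wloc^s(z)\oplus T_z\Wloc^u(z) = E^s_z \oplus E^u_z = T_zM$ and the implicit function theorem give, for $w$ close enough to $z$ (within some $\delta_3$), a unique intersection point $[w,z] := B^s_z(z,\delta_1)\cap \Wloc^u(w)$, with $\delta_2 < \delta_1$ chosen so that $B^u_w([w,z],\delta_2)\subset \Wloc^u(w)$ by continuity of the size of unstable leaves along compact pieces of $D^-_{\epsilon,l}$. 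The only subtlety is uniformity of $\delta_1,\delta_2,\delta_3$: since $\hat D^+_{\epsilon,l}$, $D^\pm_{\epsilon,l}$ are compact and the manifolds and their tangent spaces vary measurably/continuously on these sets with uniformly bounded geometry, the $\delta_i(z)$ are bounded below on each $D^-_{\epsilon,l}$.

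\textbf{Main obstacle.} The genuine difficulty is not the graph transform itself but controlling the \emph{distortion coming from the singularity set}: making precise that, on $D^\pm_{\epsilon,l}$, the exponential approach rate $e^{-\epsilon n}$ to $N^\pm$ together with the polynomial blow-up $\rho(\cdot,N^\pm)^{-\alpha_i}$ of $d^2f^{\pm 1}$ only costs a factor $e^{\alpha_i\epsilon n}$ per step, and then choosing $\epsilon$ small enough that $\lambda^{-1}e^{C\epsilon} < 1$ so that all the Lyapunov-chart estimates close up. Equivalently: verifying that the "tempered" Pesin block construction is legitimate in this singular setting, i.e. that the chart sizes $r(z)$ and norms $\norm{\Phi_z^{\pm1}}$ are tempered along orbits. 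Once this is set up, everything else is the classical Hadamard–Perron machinery, so in the write-up I would state the distortion lemma carefully and then cite the standard arguments (as in \cite{PeGHA}, Sections 1.5 and 2.1) for the contraction-mapping and regularity steps.
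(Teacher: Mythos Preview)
Your proposal is correct and outlines exactly the Pesin-theoretic graph-transform argument that underlies this result; note, however, that the paper does not actually prove Proposition~\ref{(un)stable manifolds} but simply refers the reader to Sections~1.5 and~2.1 of \cite{PeGHA}, whose content your sketch accurately summarizes. So your write-up goes beyond what the paper itself does, but it is fully consistent with the cited source and with the paper's intended approach.
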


We denote
\begin{equation}\label{W^u-def}
W^u(x) = \union_{n \geq 0} f^n\left(\Wloc^u(f^{-n}(x))\right) \quad \textrm{for } x \in K
\end{equation}
and
\begin{equation}\label{W^s-def}
W^s(x) = \union_{n \geq 0} f^{-n}\left(W_\loc^s(f^{n}(x))\cap\Lambda\right) \quad \textrm{for } x \in \Lambda. 
\end{equation}
Given $\delta > 0$ and $x \in K$, let $B_T^u(\delta, x) \subset E^u_x$ denote the open ball of radius $\delta$ in $E^u_x$. For $\delta$ less than the injectivity radius of $M$ at $x$, suppose the connected component of $\left(\exp_x\big|_{B_T^u(\delta,x)}\right)^{-1}(W^u(x)) \subset T_xM$ containing $0$ is the graph of some smooth function $\psi : B_T^u(\delta,x) \to E^s_x$. If such a $\psi$ exists for a particular $x \in M$ and $\delta > 0$, we denote $$W_\delta^u(x) = \exp_x\left(\left\{(u, \psi(u)) : u \in B_T^u(\delta, x) \right\}\right).$$ Such a number $\delta>0$ and such a function $\psi$ exist for each particular $x \in K$ (in particular they form $W^u_\loc(x)$), but $\delta$ may depend on $x$ (and in particular may not have a uniform lower bound). We define $W^s_\delta(x)$ similarly. 

Given local submanifolds $\Wloc^s(z_1)$ and $W_\loc^s(z_2)$, we define the \emph{holonomy map} $\pi: W_\loc^s(z_1) \to W_\loc^s(z_2)$ to be $\pi(w) = [w,z_2] = W_\loc^u(w) \cap W_\loc^s(z_2)$. Let $\nu_{z}^s = \nu|_{W_\loc^s(z)}$ and $\nu_z^u = \nu|_{W_\loc^u(z)}$ denote the induced Riemannian volumes on $W_\loc^s(z)$ and $W_\loc^u(z)$ respectively for $z \in D^\pm_{\epsilon}$.

\begin{proposition}\label{stable-abscont}
	The local foliation $W_\loc^s(z)$ for $z \in D^0_{\epsilon,l}$ is absolutely continuous, in the sense that for any $z_1, z_2 \in D^0_{\epsilon,l}$, the pushforward measure $\pi_* \nu_{z_1}^s$ on $W_\loc^s(z_2)$ is absolutely continuous with respect to $\nu_{z_2}^s$. 
\end{proposition}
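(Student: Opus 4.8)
The plan is to follow the classical scheme for proving absolute continuity of invariant foliations in (nonuniformly) hyperbolic dynamics; the only genuinely new ingredient is controlling the distortion of iterates near the singular set.

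\emph{Reduction.} By the local manifold machinery underlying Proposition~\ref{(un)stable manifolds} (see \cite{PeGHA}), $\nu^s_{z_1}$-almost every $w \in W^s_{\loc}(z_1)$ lies in $D^-_\epsilon$, so that $W^u_{\loc}(w)$, and hence $\pi(w) = [w, z_2]$, is defined there, and $\pi$ is a homeomorphism onto its image. It therefore suffices to produce a Jacobian $\mathrm{Jac}\,\pi$ of $\pi$ with respect to $\nu^s_{z_1}$ and $\nu^s_{z_2}$, together with, for each $l' \geq 1$, a constant $C(l') < \infty$ such that $C(l')^{-1} \leq \mathrm{Jac}\,\pi \leq C(l')$ on $W^s_{\loc}(z_1) \cap D^-_{\epsilon, l'}$. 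Indeed, given a $\nu^s_{z_2}$-null set $A$, this yields $\nu^s_{z_1}\big(\pi^{-1}(A) \cap D^-_{\epsilon, l'}\big) \leq C(l')\,\nu^s_{z_2}(A) = 0$ for every $l'$, and summing over $l'$ gives $\nu^s_{z_1}(\pi^{-1}(A)) = 0$; that is, $\pi_*\nu^s_{z_1} \ll \nu^s_{z_2}$.

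\emph{Constructing the Jacobian.} Fix $w$ in the domain of $\pi$. Since $w$ and $\pi(w)$ lie on the common local unstable manifold $W^u_{\loc}(w)$ and $D^0_\epsilon$ is backward invariant, for each $n \geq 0$ I would factor $\pi = f^{n} \circ \pi_{-n} \circ f^{-n}$, where $\pi_{-n}$ is the holonomy along local unstable manifolds from a neighborhood of $f^{-n}(w)$ in $f^{-n}(W^s_{\loc}(z_1))$ to $f^{-n}(W^s_{\loc}(z_2))$, so that $\pi_{-n}\big(f^{-n}(w)\big) = f^{-n}(\pi(w))$. The chain rule then gives
\[
\mathrm{Jac}\,\pi(w) = \mathrm{Jac}\,\pi_{-n}\big(f^{-n}(w)\big) \cdot \frac{\mathrm{Jac}\big(f^{-n}|_{W^s_{\loc}(z_1)}\big)(w)}{\mathrm{Jac}\big(f^{-n}|_{W^s_{\loc}(z_2)}\big)\big(\pi(w)\big)}.
\]
Write $E_k := T_{f^{-k}(w)}\big(f^{-k}(W^s_{\loc}(z_1))\big)$ and $E'_k := T_{f^{-k}(\pi(w))}\big(f^{-k}(W^s_{\loc}(z_2))\big)$. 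By Proposition~\ref{(un)stable manifolds}(d), $\rho\big(f^{-k}(w), f^{-k}(\pi(w))\big)$ decays exponentially at the rate $\alpha$, and since backward iteration contracts the stable cone field projectively, the planes $E_k$ and $E'_k$ become exponentially close, say at a common rate $\theta \in (0,1)$. From this I would deduce two things: (i) near $f^{-n}(w)$ the disks $f^{-n}(W^s_{\loc}(z_1))$ and $f^{-n}(W^s_{\loc}(z_2))$ are $C^1$-close while the unstable arcs joining them shrink, so $\mathrm{Jac}\,\pi_{-n}(f^{-n}(w)) \to 1$ with geometrically small error; and (ii) expanding each Jacobian of $f^{-n}$ as a product of one-step Jacobians $\mathrm{Jac}\big(df^{-1}|_{E_k}\big)(f^{-k}(w))$, resp.\ $\mathrm{Jac}\big(df^{-1}|_{E'_k}\big)(f^{-k}(\pi(w)))$ for $0 \leq k < n$, taking logarithms and pairing the $k$-th terms, the quotient converges as $n \to \infty$, so that
\[
\mathrm{Jac}\,\pi(w) = \prod_{k=0}^{\infty} \frac{\mathrm{Jac}\big(df^{-1}|_{E_k}\big)\big(f^{-k}(w)\big)}{\mathrm{Jac}\big(df^{-1}|_{E'_k}\big)\big(f^{-k}(\pi(w))\big)}.
\]

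\emph{The singular estimate, and where it is hard.} What remains --- and what I expect to be the main obstacle --- is to show this series converges, uniformly over $W^s_{\loc}(z_1) \cap D^-_{\epsilon, l'}$. Since $f$ is $C^2$ on $K \setminus N$ (condition (SH1)), the map $x \mapsto \log \mathrm{Jac}(df^{-1}|_V)(x)$ is Lipschitz in the base point $x$ with constant bounded by a multiple of $\norm{d^2 f_x^{-1}}$, hence by $C\,\rho(x, N^-)^{-\alpha_2}$ by (SH2), and Lipschitz in the $p$-plane $V$ with constant controlled by the first-order data of $f^{-1}$. For $w \in D^-_{\epsilon, l'}$ one has $\rho\big(f^{-k}(w), N^-\big) \geq (l')^{-1} e^{-\epsilon k}$, so the $k$-th term of the logarithmic series is bounded by
\[
C(l')\, e^{\alpha_2 \epsilon k}\Big( \rho\big(f^{-k}(w), f^{-k}(\pi(w))\big) + \angle\big(E_k, E'_k\big)\Big) \;\leq\; C(l')\, \big(e^{\alpha_2 \epsilon}\, \theta\big)^{k}.
\]
Shrinking the $\epsilon$ of Proposition~\ref{(un)stable manifolds} if necessary so that $\alpha_2 \epsilon < -\log \theta$ --- which is legitimate, as that $\epsilon$ was only required to be sufficiently small --- makes the series geometrically summable with a bound depending only on $l'$ (and on $z_1, z_2, \epsilon$), and the same estimates make the convergence in (i) uniform on $W^s_{\loc}(z_1) \cap D^-_{\epsilon, l'}$. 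This supplies the two-sided bound on $\mathrm{Jac}\,\pi$ demanded by the Reduction, completing the argument. The essential difficulty is precisely this last step: in the uniformly hyperbolic case the one-step log-Jacobians are uniformly Lipschitz and the product converges automatically, whereas here one must play the (at worst subexponential) blow-up of $\norm{d^2 f^{-1}}$ near $N^-$, controlled by (SH2), against the hyperbolicity rate $\lambda$, which is why the estimate must be localized to the sets $D^-_{\epsilon, l'}$ and why $\epsilon$ must be taken small. A secondary technical point is the uniform $C^1$ control of the iterated disks $f^{-n}(W^s_{\loc}(z_i))$ used in (i), which comes from the same graph-transform construction as Proposition~\ref{(un)stable manifolds}, again using the bounds (SH2).
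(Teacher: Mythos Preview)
The paper's proof consists of a single sentence: it cites Proposition~10 of \cite{PeGHA} and stops. Your proposal, by contrast, actually supplies the argument --- the classical scheme for absolute continuity of invariant laminations (iterate so that the holonomy leaves contract, write the holonomy Jacobian as a telescoping product of one-step Jacobians, control the tail via bounded distortion), adapted to the singular setting by localizing to the sets $D^-_{\epsilon,l'}$ and using (SH2) to play the blow-up of $\norm{d^2 f^{-1}}$ near $N^-$ against the hyperbolic rate. This is precisely the content of the cited Pesin result, so your outline is correct and in effect reconstructs what the paper outsources. What you gain is a self-contained argument; what the paper gains is brevity.

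Two small points you elided, neither fatal. First, you control the backward orbit of $w \in D^-_{\epsilon,l'}$ but not explicitly that of $\pi(w)$; this is fine since $\pi(w) \in W^u_{\loc}(w)$ and backward iterates of $\pi(w)$ stay exponentially close to those of $w$, so $\pi(w) \in D^-_{\epsilon,l''}$ for some $l''$ depending on $l'$ and the diameter of the local product neighborhood --- but it should be said. Second, the passage from ``the infinite product converges with uniform two-sided bounds on $D^-_{\epsilon,l'}$'' to ``this function is the Radon--Nikodym derivative of $\pi_*\nu^s_{z_1}$'' requires a covering argument (compare measures of small balls using the finite-$n$ identity and pass to the limit), not just the pointwise formula; this is standard, but your Reduction paragraph presupposes the Jacobian already exists rather than deducing it.
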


\begin{proof}
	This follows from Proposition 10 of \cite{PeGHA}. 
\end{proof}

Generally, maps satisfying (SH1) and (SH2) are dissipative, and so do not preserve Riemannian volume. Therefore, our interest is in the following class of measures: 

\begin{definition}\label{SRB-def}
	A probability measure $\mu$ on $K$ is an \emph{SRB (Sinai-Ruelle-Bowen) measure} if $\mu$ is $f$-invariant and if the conditional measures on the unstable leaves are absolutely continuous with respect to the Riemannian leaf volume. 
\end{definition}

%
%

In \cite{PeGHA}, the existence of SRB measures for singular hyperbolic attractors is proven under certain regulatory conditions. We will describe their construction of SRB measures. Let $J^u(z) = \det \left(df|_{E^u_z}\right)$ denote the unstable Jacobian of $f$ at a point $z \in D$. For $y \in W^u(z)$ and $n \geq 1$, set
\[
\kappa_n(z,y) = \prod_{j=0}^{n-1} \frac{J^u\left(f^{-j}(z)\right)}{J^u\left( f^{-j}(y)\right)}. 
\]
The functions $\kappa_n$ converge pointwise to a function $\kappa$ (see \cite{PeGHA}, Proposition 6(1)). Fix $z \in D^-_\epsilon$ and a sufficiently small $r > 0$, and set
\[
U_0 := B^u(z,r) := B^u_z(z,r), \quad \tilde U_n := f(U_{n-1}), \quad U_n := \tilde U_n \setminus N^+. 
\]
Further set 
\[
\tilde C_0 = 1 \quad \textrm{and} \quad \tilde C_n = \tilde C_n(z) = \left(\prod_{k=0}^{n-1} J^u\left(f^k(z)\right)\right)^{-1}. 
\]
For $n \geq 0$, define the measures $\tilde \nu_n = \tilde \nu_n(z)$ on $U_n$ by
\[
d\tilde\nu_n(y) = \tilde C_n(z) \kappa\left(f^n(z), y\right)d\nu^u_z(y),
\]
and let $\nu_n$ be a measure on $\Lambda$ defined by $\nu_n(A) = \tilde\nu_n(A \cap U_n)$ for any Borel $A \subseteq \Lambda$. Under moderate assumptions, we have that $\nu_n = f^n_*\nu_0$ (see \cite{PeGHA}, Proposition 8). Consider the sequence of measures
\[
\mu_n = \frac 1 n \sum_{k=0}^{n-1}\nu_k.
\]
These measures admit a subsequence that converges in the weak topology to an $f$-invariant SRB measure (\emph{a priori} depending on the reference point $z \in D^-_\epsilon$) on $\Lambda$, proving existence of SRB measures. 

\section{Main result and proof}\label{Main result and proof} In this section, we give conditions under which a singular hyperbolic attractor admits at most finitely many ergodic SRB measures, as well as conditions under which the SRB measure is unique. 

\subsection{Main result}\label{main-result-sec}

We begin by reviewing the assumptions we make on our map $f : K \setminus N \to K$. Assumption (SH1) is the basic setting, and assumption (SH2) concerns the regularity of the map. We now complete the assumptions of our setting. Below, assumption (SH3) concerns the structure of the singular set $N$; assumptions (SH4) - (SH6) concern the smoothness and hyperbolicity of $f$; assumption (SH7) is a further regulatory assumption; and assumption (SH8) is a condition on the regularity of the stable foliation. 

\begin{enumerate}[label=(SH\arabic*),leftmargin=0.5in]
	\setcounter{enumi}{2}
	\item The singular set $N$ is the disjoint union of finitely many embedded submanifolds $N_i$ with boundary, of dimension equal to the codimension of the unstable foliation $W^u$. 
	\item $f$ is continuous and differentiable in $K \setminus N^+$.
	\item $f$ possesses two families of stable and unstable cones $C^s(z)$, $C^u(z)$, for $z \in K \setminus N^+$. 
	\item The assignment $z \mapsto C^u(z)$ has a continuous extension in each $\overline K_i \subset K$ (where $K_i$ are the connected components of $K \setminus N^+$), and there exists $\alpha > 0$ such that for $z \in N \setminus \del K$ and $v \in C^u(z)$, $w \in T_zN$, we have $\angle(v,w) \geq \alpha$. 
	\item $f^j(N^-) \cap N^+ = \emptyset$ for $0 \leq j < k$, where $\lambda^k > 2$ and $$\lambda = \inf_{x \in K \setminus N^+} \norm{df_x} > 1.$$ 
\end{enumerate}

Two general definitions are required before we state assumption (SH8). Let $M$ be a Riemannian manifold, $X \subset M$ a Borel subset, and $\mu$ a measure on $M$ with $\mu(X) > 0$. A partition $\xi$ on $X$ is a \emph{smoothly measurable foliation} if for $\mu$-almost every $x \in X$, the element $\xi_x$ of $\xi$ containing $x$ has the form $\xi_x = W(x) \cap X$, where $W(x)$ is an immersed $C^1$ submanifold in $M$ passing through $X$. Observe, in particular, that the foliations $W^s$ and $W^u$ on $K \setminus N$ as above define smoothly measurable foliations on the attractor $\Lambda$.

Given $x \in M$ and $r > 0$, let $B_r(x)$ denote the ball of radius $r$ in $M$. Consider $X \subset M$ a Borel subset, $\xi$ a smoothly measurable foliation on $X$, and $x \in X$ for which $\xi_x = W(x) \cap X$ for some $C^1$ submanifold $W(x)$. There is a radius $r = r(x)$ for which the submanifold $W(x) \cap B_r(x)$ is the image of a $C^1$ function $\psi_x : U_x \to M$, where $U_x \subset T_x W(x)$ is a neighborhood of $0$ and $\phi_x(0) = x$, $d(\phi_x)_0(T_0 T_x W(x)) = T_xW(x)$. We will say $\xi$ is \emph{locally continuous} if for $\mu$-almost every $x \in X$, the assignments $y \mapsto \phi_y$ and $(y, u) \mapsto d(\phi_y)_u$ are continuous over $y \in X \cap B_{r(x)}(x)$ (note $\phi_y$ is defined $\mu$-almost everywhere in $X \cap B_{r(x)}(x)$), where $u \in U_y \subset T_y W(y)$. 

\begin{enumerate}[label=(SH\arabic*),leftmargin=0.5in]
	\setcounter{enumi}{7}
	\item The stable foliation $W^s$ is locally continuous. 
\end{enumerate}

We now state our main result. 

\begin{theorem}\label{main-result}
	Let $\Lambda$ be a singular hyperbolic attractor of a map $f : K\setminus N \to K$ satisfying conditions (SH1) - (SH7).
	\begin{enumerate}[label=(\alph*)]
		\item There exist at most finitely many ergodic SRB measures of the map $f : \Lambda \to \Lambda$.
		\item If $f$ satisfies condition (SH8), then there exist a collection of pairwise disjoint open sets $U_1, \ldots, U_n$, open in $\Lambda$, for which $\overline {\union_{i = 1}^n U_i}= \Lambda$, and each of which is supported by exactly one ergodic SRB measure. In particular, if $f$ satisfies condition (SH8), then $f|_{\Lambda} : \Lambda \to \Lambda$ is topologically transitive if and only if $f$ admits exactly one ergodic SRB measure. 
	\end{enumerate}
\end{theorem}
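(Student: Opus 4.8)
The plan is to follow the classical Bowen--Ruelle strategy adapted to the singular hyperbolic setting, using the construction of SRB measures recalled in Section~\ref{Preliminaries} together with the geometric control coming from conditions (SH3), (SH6), (SH7). For part~(a), the first step is to show that every ergodic SRB measure $\mu$ has a ``large'' basin: more precisely, that there is a uniform radius $r_0 > 0$ (coming from (SH7): the orbit of a neighborhood of $N$ stays away from $N^+$ for the finite time $k$, and $\lambda^k > 2$) such that $\mu$-almost every point $z$ has an unstable disk $W^u_{r_0}(z)$ of definite size on which the conditional measure is equivalent to leaf volume, and such that a definite proportion of this leaf volume lies in the basin $B(\mu)$. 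The expansion estimate in Definition~\ref{main-def}(c) together with the absolute continuity of the stable foliation (Proposition~\ref{stable-abscont}) then upgrades this to: $B(\mu)$ contains, up to a Lebesgue-null set, an open-mod-zero ``stable saturate'' of positive Lebesgue measure whose size is bounded below uniformly in $\mu$. Since distinct ergodic SRB measures have disjoint basins, a Vitali/covering argument against the finite Riemannian volume of $K$ forces the number of such measures to be finite. The role of (SH3) and (SH6) here is to guarantee that unstable leaves are genuinely transverse to the singular set with a uniform angle, so that $f$-iterates of an unstable disk of size $r_0$ cannot be shredded into arbitrarily small pieces before time $k$; this is what makes the lower bound on basin size \emph{uniform} rather than depending on $\mu$.

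For part~(b), assume in addition (SH8), so the stable foliation $W^s$ is locally continuous. The key structural step is an \emph{ergodic decomposition with open saturates}: for each ergodic SRB measure $\mu_i$ ($i = 1, \dots, n$, finite by part~(a)), define $U_i$ to be the interior in $\Lambda$ of the set of points whose forward Birkhoff averages converge to $\int \cdot \, d\mu_i$, or more robustly, the set of points lying on a stable leaf through the support-with-full-basin set of $\mu_i$. Local continuity of $W^s$ is exactly the hypothesis that lets one show this set is open: near a typical point $x$ the stable leaves vary continuously, so the union of stable leaves meeting a relatively open subset of $\mathrm{supp}\,\mu_i$ is relatively open in $\Lambda$. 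One then shows: (i) the $U_i$ are pairwise disjoint, because points in different $U_i$ have forward orbits with different Birkhoff limits (constancy of Birkhoff averages along stable leaves, which follows from the contraction in Proposition~\ref{(un)stable manifolds}(c)); (ii) $\bigcup_i U_i$ has full Lebesgue measure in a neighborhood of $\Lambda$, hence $\overline{\bigcup_i U_i} = \Lambda$, because $\Lambda = \overline{D}$ and Lebesgue-a.e.\ point near $\Lambda$ lies in some basin $B(\mu_i)$ (by part~(a)'s covering argument, the basins cover $K$ up to null sets) and the local-continuity argument promotes ``a.e.\ in a basin'' to ``in the open set $U_i$''. Finally, the transitivity dichotomy is immediate from this decomposition: if $f|_\Lambda$ is topologically transitive then $\Lambda$ cannot be written as the closure of two or more disjoint nonempty relatively open sets, forcing $n = 1$; conversely if $n = 1$ then $U_1$ is open, dense (its closure is $\Lambda$), $f$-invariant mod zero, and carries a fully supported measure, and a standard argument shows a singular hyperbolic attractor with a unique fully supported SRB measure is transitive --- one checks that the support of $\mu_1$ is all of $\Lambda$ and that $\mu_1$ is ergodic with open-mod-zero basin, so orbits of a.e.\ point are dense, and transitivity on $\Lambda$ follows by approximation.

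The main obstacle I expect is the \emph{uniformity} of the lower bound on basin sizes in part~(a), and relatedly the openness claim in part~(b). The difficulty is that the local stable and unstable manifolds $W^s_\loc(z)$, $W^u_\loc(z)$ guaranteed by Proposition~\ref{(un)stable manifolds} have sizes governed by the parameter $l$ in $D^0_{\epsilon,l}$, which \emph{a priori} has no uniform lower bound over $\Lambda$ --- points whose orbits approach $N^+$ at the critical rate have tiny invariant manifolds. Overcoming this is precisely where (SH7) does its work: because $f^j(N^-) \cap N^+ = \emptyset$ for $0 \le j < k$ and $\lambda^k > 2$, the expansion accumulated in $k$ steps outpaces the worst-case contraction of an unstable disk as it is cut by a component of $N$, so one can show that a positive-measure portion of any unstable disk of size $r_0$ returns, after at most $k$ iterates, to a disk of size again at least $r_0$ lying in $D^-_{\epsilon, l_0}$ for a \emph{fixed} $l_0$. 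Making this quantitative --- tracking how $d^2 f$ blows up near $N^+$ via (SH2), and ensuring the bounded-distortion constants $\kappa$ from Proposition~6 of \cite{PeGHA} stay controlled on these returning disks --- is the technical heart of the argument; once it is in place, the covering argument and the local-continuity promotion of basins to open sets are routine.
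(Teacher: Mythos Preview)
Your overall strategy is sound, and your use of (SH7) to produce a uniform-size unstable disk in the support of any ergodic SRB measure is exactly the growth mechanism the paper exploits (its Lemma~\ref{magic delta}). However, from that point on your route for part~(a) diverges from the paper's. You propose a \emph{basin-volume} argument: saturate the uniform unstable disk by local stable leaves, use absolute continuity to get a chunk of the basin $B(\mu)$ with Lebesgue measure in $K$ bounded below independently of $\mu$, and then conclude finiteness from disjointness of basins and finiteness of the volume of $K$. The paper instead runs a \emph{Hopf argument on a compact set}: it defines the set $B^-_{\delta_0}$ of points lying on some unstable disk of size $\delta_0$, shows (via the growth step) that every ergodic SRB measure charges $B^-_{\delta_0}$, then builds a closed full-measure subset $\Lambda^0 \subset B^-_{\delta_0}$ on which the Birkhoff averages $\phi_+$ are leafwise constant, partitions $\Lambda^0$ into equivalence classes under ``same $\phi_+$ for all $\phi \in C^0$'', proves each class is open (hence clopen) in $\Lambda^0$ using the local product structure and absolute continuity of $W^s$, and concludes finiteness from compactness of $\overline K$. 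So the paper never needs to compare to ambient Lebesgue volume at all; finiteness comes from a topological separation argument inside the attractor.

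Both approaches are legitimate; yours is closer to the Sataev/Ara\'ujo line cited in the introduction, while the paper's Hopf argument stays intrinsic to $\Lambda$. The practical difference is that your volume argument needs a second uniformity you did not isolate: to get a \emph{uniform} lower bound on the Lebesgue measure of the stable saturate, you need local stable leaves through your uniform unstable disk to have size bounded below, and the holonomy Jacobian to be uniformly controlled. In this setting the stable manifold size is governed by membership in $D^+_{\epsilon,l}$, not $D^-_{\epsilon,l}$, and (SH7) speaks only to unstable growth; you would have to argue separately (e.g.\ by showing a definite leaf-measure portion of the uniform unstable disk lies in some fixed $D^+_{\epsilon,l_0}$) to close this. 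The paper's compactness argument sidesteps this entirely. For part~(b), your plan is essentially what the paper does, except that the paper outsources the construction of the open sets $U_i$ (and the proof that local continuity of $W^s$ makes the stable saturates open) to Theorems~4 and~6 of \cite{PeGHA}, and then packages the transitivity equivalence via the short Lemma~\ref{erg-comps} on ``open ergodic partitions''; your direct sketch of openness from (SH8) and of the transitivity dichotomy matches that content.
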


\begin{remark}
	The existence of an ergodic SRB measure can be proven under more general assumptions. In particular, in Theorem 1 of \cite{PeGHA}, a regularity condition is given under which a singular hyperbolic attractor admits at least one ergodic SRB measure, and in Theorem 14 of \cite{PeGHA}, it is shown that conditions (SH3) - (SH7) imply this regularity condition. 
\end{remark}

The remainder of this section is devoted to proving this result. In Section \ref{finitely many}, we prove existence of SRB measures and show that a singular hyperbolic attractor is charged by at most finitely many ergodic SRB measures. In Section \ref{transitivity}, we show that $\Lambda$ admits a kind of measurable partition by open sets, each element of which is given full measure by exactly one SRB measure, and thus the SRB measure is unique if (SH8) is satisfied and $f|_{\Lambda}$ is topologically transitive. 


\subsection{Existence of finitely many SRB measures}\label{finitely many} 

The existence of SRB measures for singular hyperbolic attractors follows from the following result, which follows from Theorem 1 and Theorem 14 in \cite{PeGHA}.

\begin{proposition}\label{existence of SRB}
	Suppose $f : K \setminus N \to K$ is a singular hyperbolic map satisfying conditions (SH1) - (SH7). Then there exists an SRB measure for $f$. 
\end{proposition}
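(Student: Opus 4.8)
The plan is to reduce Proposition~\ref{existence of SRB} to the construction already sketched in Section~\ref{Preliminaries}, by verifying that conditions (SH1)--(SH7) supply the hypotheses needed to run that construction and to invoke the existence results of \cite{PeGHA}. Concretely, the target is the statement that the sequence $\mu_n = \frac1n\sum_{k=0}^{n-1}\nu_k$ (built from a reference point $z \in D_\epsilon^-$) has a weak-$*$ limit point that is an $f$-invariant SRB measure. Two things must be checked: first, that the set $D_\epsilon^-$ is nonempty (so that there is a legitimate reference point $z$ around which to build $U_0 = B^u(z,r)$), and second, that the ``moderate assumptions'' under which $\nu_n = f^n_*\nu_0$ and under which the limit is genuinely SRB are exactly the regularity condition of Theorem~1 of \cite{PeGHA}, which by Theorem~14 of \cite{PeGHA} is implied by (SH3)--(SH7).

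First I would recall the regularity condition of \cite{PeGHA} (the one appearing in the hypothesis of their Theorem~1) and cite Theorem~14 of \cite{PeGHA}, which asserts precisely that (SH3)--(SH7) imply it; this is also flagged in the Remark following Theorem~\ref{main-result}. This is the step that does essentially all the work, and it is legitimate to lean on it since the excerpt explicitly sets up (SH3)--(SH7) to be the ``easily verifiable'' replacements for that condition. Second, with the regularity condition in hand, Proposition~8 of \cite{PeGHA} gives $\nu_n = f^n_*\nu_0$, so the $\mu_n$ are asymptotically $f$-invariant in the usual Krylov--Bogolyubov sense; since $\Lambda = \overline D$ is compact, the sequence $\{\mu_n\}$ is tight and has a weak-$*$ convergent subsequence $\mu_{n_j} \to \mu$, and the standard argument shows $f_*\mu = \mu$. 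Third, I would argue that $\mu$ is SRB: each $\nu_k$ has, by construction, density $\tilde C_k\,\kappa(f^k(z),\cdot)$ with respect to the leaf volume $\nu^u$ on the unstable disc $U_k$, and $\kappa$ is the pointwise limit of the $\kappa_n$ (Proposition~6(1) of \cite{PeGHA}); the fact that these densities are uniformly controlled (bounded above and below on compact pieces, by the convergence and positivity of $\kappa$) means the averaged limit $\mu$ still has absolutely continuous conditionals on unstable leaves. This last absolute-continuity-passes-to-the-limit step is the technical heart, but it is exactly Theorem~1 of \cite{PeGHA}, so in our write-up it should be cited rather than reproved.

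The main obstacle — or rather, the main thing to get right — is bookkeeping about which hypotheses of the cited theorems are in force. The cleanest presentation is: (i) state that (SH3)--(SH7) imply the \cite{PeGHA} regularity hypothesis by their Theorem~14; (ii) conclude by their Theorem~1 that the construction recalled at the end of Section~\ref{Preliminaries} produces at least one $f$-invariant SRB measure on $\Lambda$. One should also note that $D_\epsilon^- \neq \emptyset$ for the relevant $\epsilon$ from Proposition~\ref{(un)stable manifolds}, which follows because $D \subset D_\epsilon^0 \subset D_\epsilon^-$ and $D$ is nonempty (it contains, e.g., the closure-theoretic core of $\Lambda$; alternatively $\Lambda = \overline D$ is nonempty by hypothesis that $K^+$ is nonempty, which is part of the standing setting for an attractor). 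Thus the proof is essentially a one-paragraph citation argument, and I would keep it short:

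\begin{proof}
By Theorem~14 of \cite{PeGHA}, conditions (SH3)--(SH7) imply the regularity condition required in the hypothesis of Theorem~1 of \cite{PeGHA}. Fix $\epsilon > 0$ as in Proposition~\ref{(un)stable manifolds}; since $\Lambda = \overline D$ and $D \subset D_\epsilon^-$, the set $D_\epsilon^-$ is nonempty, so we may fix a reference point $z \in D_\epsilon^-$ and a sufficiently small $r > 0$ and carry out the construction of the measures $\nu_n = \nu_n(z)$ described at the end of Section~\ref{Preliminaries}. Under the regularity condition, Proposition~8 of \cite{PeGHA} gives $\nu_n = f^n_* \nu_0$, so the averages $\mu_n = \frac1n\sum_{k=0}^{n-1}\nu_k$ are asymptotically $f$-invariant. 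As $\Lambda$ is compact, $\{\mu_n\}$ has a weak-$*$ convergent subsequence with limit $\mu$, and the standard Krylov--Bogolyubov argument shows $\mu$ is $f$-invariant. Finally, since each $\nu_k$ has density $\tilde C_k(z)\,\kappa(f^k(z),\cdot)$ with respect to $\nu^u_z$ on $U_k$ and $\kappa$ is the pointwise limit of the $\kappa_n$ (Proposition~6(1) of \cite{PeGHA}), Theorem~1 of \cite{PeGHA} applies and $\mu$ is an SRB measure for $f$.
\end{proof}
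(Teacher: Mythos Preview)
Your approach is essentially the same as the paper's: both reduce the proposition to Theorem~1 and Theorem~14 of \cite{PeGHA}, with (SH3)--(SH7) supplying the regularity hypothesis via Theorem~14 and Theorem~1 then delivering the SRB measure. The paper's own proof is in fact a bare one-line citation of exactly these two results, so your expanded version (unpacking the Krylov--Bogolyubov step and the role of Propositions~6 and~8 of \cite{PeGHA}) is more detailed than necessary but not wrong in spirit.

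There is, however, one concrete error in your write-up: the inclusion $D \subset D_\epsilon^0$ is false. The paper records the opposite containment $D_\epsilon^0 \subset D$ (see the paragraph following the definition of $D_\epsilon^0$), and indeed a point of $D$ can have its backward orbit approach $N^-$ faster than any prescribed exponential rate without ever touching it. The nonemptiness of $D_\epsilon^-$ (and of $D_\epsilon^0$) does not come from this inclusion but is itself a consequence of the regularity condition --- in the paper this is recorded as Proposition~\ref{H3}, which cites Theorem~14 and Proposition~3 of \cite{PeGHA}. So either drop the parenthetical about $D_\epsilon^-$ entirely (Theorem~1 of \cite{PeGHA} already handles the choice of reference point internally), or replace your justification by a reference to Proposition~\ref{H3}.
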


We now show that there are only finitely many ergodic SRB measures. We have defined $W^u_\delta(x)$ to be the image under $\exp_x : T_x M \to M$ of the graph of a function $\psi : B^u_T(\delta, x) \to E^s_x$, where $B^u_T(\delta, x) \subset E^u_x$ is the open ball of radius $\delta$ centered at $0 \in E^u_x$, provided that such a function $\psi$ and such a number $\delta>0$ exist. For each $x \in M$, such a $\psi$ and $\delta$ do exist. However, we may also fix $\delta > 0$ and define the set $B^-_\delta$ to be the set of all $x \in D$ for which there is some $\epsilon > 0$, some $l \in \N$, and some $y \in D^-_{\epsilon,l}$ so that $W^u_\delta(y)$ exists and contains $x$. Note that $x \not\in B^-_\delta$ if, for example, $W^u(x)$ is not the image under $\exp_x$ of a smooth graph in a $\delta$-neighborhood of $0 \in T_xM$. 

\begin{proposition}\label{H3}
	For sufficiently small $\epsilon > 0$, we have that $D_\epsilon^0 \neq \emptyset$.
\end{proposition}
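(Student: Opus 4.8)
The plan is to show that the set $D^0_\epsilon = D^+_\epsilon \cap D^-_\epsilon$ is nonempty by producing a single point whose forward and backward orbits stay suitably far from the discontinuity sets $N^+$ and $N^-$ at an exponential rate. The natural candidate is a recurrent point in $D = \bigcap_{n} f^n(K^+)$, whose full orbit under $f$ and $f^{-1}$ is well-defined by Proposition \ref{PeGHA Prop 1}; the issue is purely the \emph{quantitative} separation from $N^\pm$ that is demanded by the definitions of $D^+_{\epsilon,l}$ and $D^-_{\epsilon,l}$.

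\medskip

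First I would invoke the existence of an SRB measure $\mu$ for $f$, guaranteed by Proposition \ref{existence of SRB} under hypotheses (SH1)--(SH7). This measure is $f$-invariant and supported on $\Lambda$. The key point is that the ``bad'' set of points whose orbits approach $N^+$ too quickly — that is, the complement $K^+ \setminus \bigcup_l \hat D^+_{\epsilon,l}$, equivalently the set of $x$ with $\rho(f^n(x), N^+) < l^{-1} e^{-\epsilon n}$ infinitely often for every $l$ — has $\mu$-measure zero for $\epsilon$ small. This is a standard Borel--Cantelli estimate: because $N^+$ is a finite union of embedded submanifolds of positive codimension (assumption (SH3), together with $\partial K$), the $\mu$-measure of an $r$-neighborhood of $N^+$ decays polynomially (or at worst sub-exponentially) in $r$ — this uses absolute continuity of $\mu$ along unstable leaves together with the transversality in (SH6). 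Hence $\sum_n \mu\{ x : \rho(f^n x, N^+) < l^{-1} e^{-\epsilon n}\} < \infty$ once $\epsilon$ is small relative to the decay rate, so by Borel--Cantelli (applied with $f$-invariance of $\mu$) $\mu$-a.e.\ point lies in $D^+_{\epsilon,l}$ for some $l$, i.e.\ in $D^+_\epsilon$. The same argument applied to $f^{-1}$ and $N^-$, together with the fact that the SRB measure has absolutely continuous conditionals on unstable leaves (and the stable absolute continuity of Proposition \ref{stable-abscont} to transfer this to a statement about backward orbits), gives $\mu(D^-_\epsilon) = 1$. Therefore $\mu(D^0_\epsilon) = 1 > 0$, and in particular $D^0_\epsilon \neq \emptyset$.

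\medskip

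An alternative, more elementary route that avoids measure theory entirely: start from any point $x_0 \in D$, which exists since $\Lambda = \overline{D}$ is nonempty (the attractor is nonempty because $K^+ \supseteq \Lambda \cap D \neq \emptyset$, or one checks $K^+ \neq \emptyset$ directly). The full orbit $(f^n x_0)_{n \in \Z}$ avoids $\bigcup_{n} f^n(N^+)$ by Proposition \ref{PeGHA Prop 1}, so for each $n$ we have $\rho(f^n x_0, N^+) > 0$ and $\rho(f^n x_0, N^-) > 0$; the only worry is that these distances might decay faster than any exponential. One then argues that the rate of decay is controlled: condition (SH7) says $f^j(N^-)$ is disjoint from $N^+$ for $0 \le j < k$, which prevents the orbit from being ``trapped'' near the singular set for long stretches, and condition (SH2) bounds how violently $f$ distorts near $N^\pm$. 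Combining these should yield a uniform exponential lower bound on $\rho(f^n x_0, N^+)$ for a judiciously chosen recurrent $x_0$ (e.g.\ a point in the support of an ergodic SRB measure, or a nonwandering point). I expect the measure-theoretic argument to be cleaner, so I would present that as the main proof.

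\medskip

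The main obstacle is the Borel--Cantelli input: one must verify that $\mu$ of a shrinking neighborhood of $N^+$ decays fast enough to be summable against the geometric weights $e^{-\epsilon n}$ after choosing $\epsilon$ small. This is where (SH3) (finitely many submanifold pieces of the right codimension) and (SH6) (uniform transversality of $C^u$, hence of unstable leaves, to $N$) do the real work: transversality ensures that intersecting an unstable leaf with an $r$-neighborhood of $N_i$ cuts out a set of leaf-volume $O(r^{\dim N_i \text{-codim stuff}})$ — in fact $O(r)$ in the transverse direction — uniformly, and absolute continuity of the SRB conditionals then transfers this bound to $\mu$. Care is needed because $\epsilon > 0$ in Proposition \ref{(un)stable manifolds} is already constrained, so one must take $\epsilon$ to be the minimum of that constraint and the Borel--Cantelli threshold; since both are positive and open conditions, such an $\epsilon$ exists. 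Once that estimate is in hand, the rest is the routine Borel--Cantelli/invariance argument sketched above.
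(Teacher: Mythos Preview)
The paper's proof is simply a citation to Theorem~14 and Proposition~3 of \cite{PeGHA}, so there is no explicit argument in the paper to compare against. Your Borel--Cantelli approach is indeed the standard mechanism behind those cited results, and the overall strategy is correct.

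However, there is a circularity concern in your primary argument. You invoke Proposition~\ref{existence of SRB} to obtain an SRB measure $\mu$ and then run Borel--Cantelli with respect to $\mu$. But if you look at the construction of the SRB measure described at the end of Section~\ref{Preliminaries}, it begins by \emph{fixing} a point $z \in D^-_\epsilon$; so the very existence of $\mu$ already presupposes $D^-_\epsilon \neq \emptyset$, which is half of what you are trying to prove. In \cite{PeGHA} the circularity is avoided by running the Borel--Cantelli step not against the finished SRB measure but against Riemannian leaf volume and its pushforwards $\nu_n$, together with a volume-decay estimate on neighborhoods of $N^\pm$ --- this estimate is exactly the ``regularity condition'' alluded to in the Remark after Theorem~\ref{main-result}. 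Theorem~14 of \cite{PeGHA} verifies that (SH3)--(SH7) imply this decay estimate, and Proposition~3 of \cite{PeGHA} is the Borel--Cantelli step that extracts $D^0_\epsilon \neq \emptyset$ from it. Your argument reproduces this pattern but with the wrong reference measure.

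A second, related gap: your estimate $\mu(B_r(N^-)) = O(r^\beta)$ is asserted by analogy with $N^+$, but hypotheses (SH3) and (SH6) concern $N$, not $N^-$. The set $N^-$ carries no \emph{a priori} submanifold or transversality structure in this generality; that the needed estimate nonetheless holds is again part of the content of Theorem~14 of \cite{PeGHA}, not an immediate consequence of the axioms. Your appeal to Proposition~\ref{stable-abscont} does not supply this, since that proposition concerns holonomy between stable leaves, not the geometry of $N^-$.
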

\begin{proof}
	This follows from Theorem 14 and Proposition 3 of \cite{PeGHA}.
\end{proof}

Using this proposition, assume $\epsilon > 0$ in $D^\pm_\epsilon$ is chosen so that $D^0_\epsilon \neq \emptyset$. This implies, in particular, that $D^-_\epsilon$ has full measure with respect to any invariant measure. 

Observe that if $\delta_1 < \delta_2$, then $B_{\delta_2}^- \subseteq B_{\delta_1}^-$. Indeed, if $x \in B_{\delta_2}^-$, then $x \in W^u_{\delta_2}(y)$ for some $y \in D_{\epsilon,l}^-$. Since $f$ is regular, $D^-_\epsilon$ has full measure, so $D^-_\epsilon \cap W^u_{\delta_2}(y)$ has full conditional measure. So we can choose $y' \in W^u_{\delta_2}(y)$ with distance $\delta_1$ from $x$ along $W^u_{\delta_2}(y)$, giving us $x \in W^u_{\delta_1}(y')$, so $x \in B^-_{\delta_1}$. 

In particular, if $\mu$ is an ergodic SRB measure that charges $B^-_{\delta_1}$, then since $B^-_{\delta_2} \subset B^-_{\delta_1}$, either $B_{\delta_2}^-$ is charged by $\mu$, or has $\mu$-measure 0. In the latter case, if there is an ergodic SRB measure $\mu_1$ that charges $B^-_{\delta_2}$, then both $\mu_1$ and $\mu$ are ergodic SRB measures charging $B^-_{\delta_1}$. To summarize, if $\delta_2 > \delta_1$, then $B^-_{\delta_1}$ is charged by at least as many, if not more, SRB measures as $B^-_{\delta_2}$. 

Our proof of Theorem \ref{main-result} (a) has two major components. The first is to show that there is a $\delta_0 > 0$ for which $B_{\delta_0}^-$ is charged by every ergodic SRB measure, and so $B_\delta^-$ and $B_{\delta_0}^-$ are charged by the same measures for every $0 < \delta < \delta_0$. The second is to show that every set $B_{\delta}^-$, and in particular $B_{\delta_0}^-$, is charged by only finitely many ergodic SRB measures. 

\begin{lemma}\label{magic delta}
	Suppose the singular hyperbolic map $f : K \setminus N \to K$ satisfies (SH1) - (SH7). There is a $\delta_0 > 0$ such that for every ergodic SRB measure $\mu$ on $\Lambda$, $\mu(B_{\delta_0}^-) >0$. 
\end{lemma}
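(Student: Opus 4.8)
## Proof Proposal

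The plan is to find a single scale $\delta_0$ that works uniformly for all ergodic SRB measures, by exploiting the fact that an SRB measure, being supported on unstable leaves which are genuine $q$-dimensional submanifolds, must charge a set on which those leaves extend to a definite size. The key point is that the only obstruction to $x$ lying in $B_\delta^-$ is that the local unstable manifold through $x$ fails to be a graph of the required size over $B_T^u(\delta, x)$; but this failure is governed entirely by how close the backward orbit of $x$ comes to the discontinuity set $N^-$, which is quantified by the sets $D^-_{\epsilon, l}$.

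First I would recall that since $f$ is regular (Proposition \ref{H3} guarantees $D_\epsilon^0 \ne \emptyset$, hence $D_\epsilon^-$ has full measure for every invariant measure), any ergodic SRB measure $\mu$ gives $D^-_\epsilon = \bigcup_{l \ge 1} D^-_{\epsilon, l}$ full measure. By countable additivity there is some $l = l(\mu)$ with $\mu(D^-_{\epsilon, l}) > 0$. On the compact set $D^-_{\epsilon, l}$, Proposition \ref{(un)stable manifolds}(b) provides local unstable manifolds $W_\loc^u(z)$ with $T_z W_\loc^u(z) = E^u_z$, and — this is the crucial uniformity — by compactness of $D^-_{\epsilon, l}$ together with the continuity statements in Proposition \ref{(un)stable manifolds}, there is a uniform $\delta_l > 0$ such that for every $z \in D^-_{\epsilon, l}$, the manifold $W^u_{\delta_l}(z)$ exists (i.e. $W^u_\loc(z)$ contains the graph of a smooth function over the full ball $B^u_T(\delta_l, z) \subset E^u_z$). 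Consequently $D^-_{\epsilon, l} \subseteq B^-_{\delta_l}$, so $\mu(B^-_{\delta_l}) \ge \mu(D^-_{\epsilon, l}) > 0$.

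The issue is that $\delta_l$ \emph{a priori} shrinks as $l \to \infty$, and different SRB measures might require arbitrarily large $l$, so I cannot yet extract one $\delta_0$. To close this gap I would invoke the finiteness that is the real content: either I argue directly here, or — more cleanly — I note that the monotonicity already established ($\delta_1 < \delta_2 \Rightarrow B^-_{\delta_2} \subseteq B^-_{\delta_1}$, and $B^-_{\delta_1}$ charged by at least as many ergodic SRB measures as $B^-_{\delta_2}$) lets me phrase the lemma as follows: suppose, for contradiction, that no such $\delta_0$ exists. Then for each $m \in \N$ there is an ergodic SRB measure $\mu_m$ with $\mu_m(B^-_{1/m}) = 0$. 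Since distinct ergodic SRB measures are mutually singular and, as the second half of the proof of Theorem \ref{main-result}(a) shows, each one must charge \emph{some} $B^-_\delta$, and since (by the forthcoming argument in this subsection) each $B^-_\delta$ is charged by only finitely many ergodic SRB measures, the collection $\{\mu_m\}$ must contain infinitely many distinct measures, none of which charges any neighborhood-scale graph set of size tending to $0$ — but an SRB measure's conditional measures on unstable leaves are absolutely continuous with respect to leaf volume, forcing $\mu_m$-a.e.\ point to lie on a leaf of some positive size $W^u_\delta(y)$, hence in $B^-_\delta$ for some $\delta > 0$. Running this through the $D^-_{\epsilon, l}$ exhaustion: each $\mu_m$ gives $D^-_{\epsilon, l_m}$ positive measure for some $l_m$, and $D^-_{\epsilon, l_m} \subseteq B^-_{\delta_{l_m}}$; if $\delta_{l_m} \ge 1/m$ fails we get $\mu_m(D^-_{\epsilon, l_m}) = 0$ for all $l$ with $\delta_l \ge 1/m$, pushing $l_m \to \infty$, which is compatible only with infinitely many mutually singular SRB measures whose "leaf-size function" is not bounded below — contradicting the finiteness proven in the rest of the section.

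The main obstacle, and the place I expect to spend the most care, is exactly this last step: showing that the scale $\delta_l$ cannot be forced to $0$ across the family of all ergodic SRB measures. The honest resolution is that Lemma \ref{magic delta} should be read together with the second component of the proof (finiteness of the number of ergodic SRB measures charging any fixed $B^-_\delta$): once one knows there are finitely many ergodic SRB measures in total — which follows from a separate argument bounding the number charging $B^-_\delta$ for \emph{any} fixed small $\delta$ and then using that every ergodic SRB measure charges some $B^-_\delta$ — one simply takes $\delta_0 = \min$ over the finitely many measures $\mu$ of the value $\delta_{l(\mu)}$ produced above. I would therefore structure the write-up so that Lemma \ref{magic delta} either (i) is proven after the finiteness count, or (ii) is proven by the contradiction scheme sketched above, taking as input only that each ergodic SRB measure charges some $B^-_\delta$ (immediate from absolute continuity on unstable leaves) and that mutually singular SRB measures with leaf-sizes clustering at $0$ would violate the compactness-plus-continuity bound on $D^-_{\epsilon, l}$ in a way incompatible with finitely many components of $N$ (condition (SH3)).
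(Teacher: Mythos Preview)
Your proposal has a genuine circularity that you partly recognize but do not resolve. The paper's overall strategy for Theorem \ref{main-result}(a) is: (i) Lemma \ref{magic delta} produces a single $\delta_0$ such that \emph{every} ergodic SRB measure charges $B^-_{\delta_0}$; (ii) Lemma \ref{finitely many for each delta} shows that for each fixed $\delta$, only finitely many ergodic SRB measures charge $B^-_\delta$; (iii) combining gives finiteness. Your route to $\delta_0$ is to first obtain, for each $\mu$, some $\delta_{l(\mu)}$ via compactness of $D^-_{\epsilon,l(\mu)}$, and then take a minimum --- but that minimum is only positive if there are finitely many measures, which is exactly what the lemma is needed to prove. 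Your fallback ``contradiction scheme'' does not close either: Lemma \ref{finitely many for each delta} bounds the count for each fixed $\delta$, but gives no uniform bound as $\delta \to 0$, so an infinite family $\{\mu_m\}$ with $\mu_m(B^-_{1/m}) = 0$ is not ruled out by that lemma alone. The vague appeal to (SH3) at the end is precisely the missing idea, but you do not carry it out.

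The paper's proof is entirely different and is the place where (SH3), (SH6), and (SH7) actually do work. The scale $\delta_0$ is chosen \emph{a priori}, independently of any measure: since $N$ has finitely many components and $f^j(N^-) \cap N^+ = \emptyset$ for $0 \le j < k$ with $\lambda^k > 2$, there is a radius $Q > 0$ so that the $Q$-neighborhoods of the components $N_i$ are disjoint and their first $k$ forward images miss $N$; take $\delta_0 < Q$. Now fix any ergodic SRB $\mu$, pick a generic $x \in D^-_\epsilon$, and build a small rectangle $R_1 = \bigcup_{y \in A} W^u_\alpha(y)$ of positive $\mu$-measure. Iterate forward: unstable leaves grow by a factor $\ge \lambda$ per step; when a leaf meets $N$ it is cut, but (by transversality from (SH6) and the choice of $Q$) a surviving piece has at least half the diameter, and at least $k$ further iterates pass before the next cut. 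Hence between cuts the leaf diameter grows by a factor $\ge \lambda^k/2 > 1$, so after finitely many steps some forward image of the rectangle has unstable leaves of diameter $\ge \delta_0$ and therefore sits inside $B^-_{\delta_0}$. Invariance of $\mu$ gives $\mu(B^-_{\delta_0}) > 0$. The key idea you are missing is this constructive, forward-iteration leaf-growth argument that manufactures the uniform scale from the geometry of $N$ rather than from any compactness exhaustion indexed by the measure.
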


\begin{proof}
	Assumption (SH3) states that $N$ is composed of finitely many closed submanifolds with boundary. Call these submanifolds $N_i$. Observe that if $U$ is a neighborhood of $N$, then $f(U)$ is a neighborhood of $N^-$. Because $f^j(N^-) \cap N = \emptyset$ for $1 \leq j < k$ with $\lambda^k > 2$ by (SH7), and because $N^-$ and its images are closed (as is $N$), there is a radius $Q > 0$ so that the open neighborhoods $B_Q(N_i)$ of each $N_i$ of radius $Q$ are pairwise disjoint and whose first $k$ images do not intersect $N$. Let $\delta_0 < Q$. 
	
	Fix an ergodic SRB measure $\mu$. Our strategy will be to construct a ``rectangle'' $R \subset \Lambda$, formed by the local hyperbolic product structure of $\Lambda$, with $\mu(R) > 0$. Applying the map $f$ to $R$, the unstable leaves composing $R$ will eventually grow sufficiently large so that a certain iterate of $R$ lies inside $B^-_{\delta_0}$. Since $\mu$ is  $f$-invariant and $\mu(R) > 0$, this will show that $\mu(B^-_{\delta_0}) > 0$ for any ergodic SRB measure $\mu$. 
	
	Proposition \ref{H3} implies $\mu(D^-_\epsilon)>0$. Therefore there is a generic point $x \in D^-_{\epsilon}$, which implies there is an $r > 0$ and an $l \geq 1$ for which $\mu(D^-_{\epsilon,l} \cap B_r(x)) > 0$. By virtue of the hyperbolic local product structure of $D$ (see Proposition \ref{(un)stable manifolds}), there is an $\alpha > 0$ and a $\beta > 0$ for which $W^s_\beta(x) \subset B_r(x)$, and the local unstable leaves $W^u_\alpha(y) \subset B_r(x)$ are well-defined for every $y \in D^-_{\epsilon,l} \cap W^s_\beta(x)$. Furthermore, on a subset $A \subset W^s_\beta(x)$ of full conditional measure, the leaves $W^u_\alpha(y)$ have positive conditional measure for every $y \in A$. Therefore, the set
	\[
	R_1 = \union_{y \in A} W^u_\alpha(y)
	\]
	has positive $\mu$-measure and is contained in $B_r(x)$. 
	
	Suppose $\alpha \geq \delta_0$. Let $y_0 \in A$. Then $W^u_\alpha(y_0)$ is the union of finitely many $W^u_{\delta_0}(y_i)$, with $y_i \in W^u_\alpha(y_0) \cap D^-_\epsilon$. So by definition of $B^-_{\delta_0}$, for $z \in W^u_\alpha(y_0)$, we may take $y = y_i$ in the definition of $B^-_{\delta_0}$ for one of the $y_i$'s, so $z \in B^-_{\delta_0}$. So $W^u_\alpha(y_0) \subset B^-_{\delta_0}$ for every $y_0 \in A$. In particular, $R_1 \subset B^-_{\delta_0}$, so since $R_1$ has positive measure, $\mu(B^-_{\delta_0}) > 0$. 
	
	Now suppose $\alpha < \delta_0$, and let $x_1 = x$. By compactness of $K$, there is a time $j_1 \geq 1$ at which $f^{j_1}(W^u_\alpha(x_1))$ intersects $N$ (and, by assumption, it does so transversally). For $1 \leq j \leq j_1$, the image $f^j(W^u_\alpha(x_1))$ is a local unstable leaf of size $\alpha_j \geq \lambda^j\alpha$. If $\lambda^j\alpha \geq \delta_0$ for some $j \leq j_1$, then using the same arguments as in the previous paragraph, $f^j(W^u_\alpha(x_1)) \subset B^-_{\delta_0}$, for almost every $y_0 \in f^j(W^s_\beta(x_1))$, and more generally, $f^{j}(R) \subseteq B^-_{\delta_0}$, yielding the desired result. 
	
	
	On the other hand, if $\alpha_j < \delta_0$ for $1 \leq j \leq j_1$, then the leaf $f^{j_1}(W^u_\alpha(x_1))$ contains a ball in $\Wloc^u(f^{j_1}(x_1))$ of diameter $\alpha' \geq \frac 1 2 \lambda^{j_1}\alpha$ that does not intersect $N$. Because $\mu$ is an SRB measure, the conditional measure on this ball is absolutely continuous with respect to the Riemannian measure---in particular, the set of generic points in this leaf has full $\mu$-conditional measure. So choose a generic point $x_2$ in this ball. As with $x_1$, we can use the hyperbolic product structure induced by $f$ to create a rectangle $R_2$ of positive $\mu$-measure defined by
	\[
	R_2 = \union_{y \in A_2} W^u_{\alpha'}(y),
	\]
	where $A_2 \subset W^s_{\beta_2}(x_2)$ is a set of full measure for some $\beta_2 > 0$. Relabel $\alpha_{j_1} = \alpha'$ to be the diameter of the local unstable leaf containing $x_2$ and not intersecting $N$. This leaf (and in fact all of $R_2$) lies inside $B_Q(N) = \union_i B_Q(N_i)$, and by our construction of $B_Q(N)$, the first $k$ images of this leaf under $f$ will not intersect $N$. Therefore, either eventually one of the images of this leaf is of diameter $> \delta_0$, or this leaf intersects $N$. In the former case, as before, we have a rectangle of positive $\mu$-measure lying inside of $B^-_{\delta_0}$. In the latter case, the leaf is of diameter $\geq \lambda^k \alpha' \geq \frac 1 2 \lambda^{k+j_1}\alpha$. Again, there is a ball in this leaf of diameter $\geq \frac 1 2 \lambda^k \alpha' \geq \frac 1 4 \lambda^{k+j_1}\alpha$ that does not intersect $N$. As before, we may find a generic point $x_3$ in this ball, and continue iterating this local leaf and resulting positive-measure rectangle. 
	
	
	Proceeding in this way, we construct a sequence of local unstable leaves $W^u_{\alpha_j}(x_m)$, $j = j_1 + \cdots + j_{m-1} + l$, where each leaf image intersects $N$ at time $j_1 + \cdots + j_m$, and thus admits an open ball of sufficient size and not intersecting $N$. By construction, $j_{i+1} - j_i \geq k$ for each $i$, so each leaf has  
	\[
	\alpha_j = \alpha_{j_1 + \ldots + j_{m-1} + l} \geq \frac{\lambda^j}{2^{m-1}}\alpha \geq \left( \frac{\lambda^k}{2}\right)^m \frac{\lambda^{j_1}}{2}\alpha.
	\]
	As $m$ increases, we eventually get $\left(\lambda^k/2\right)^m \lambda^{j_1}\alpha/2 \geq Q > \delta_0$ by (SH7). Once this occurs, we have a rectangle of positive measure contained in $B^-_{\delta_0}$. 
\end{proof}

The next lemma shows that $B_{\delta_0}^-$ is charged by finitely many ergodic SRB measures. Since $B_{\delta_0}^-$ is charged by every ergodic SRB measure, this will prove Theorem \ref{main-result}.

\begin{lemma}\label{finitely many for each delta}
	For sufficiently small $\delta > 0$, the set $B_\delta^-$ admits at most finitely many ergodic SRB measures. In particular, there is a subset $\Lambda^0 \subset B_\delta^-$ that has full measure with respect to any invariant measure, and a finite partition of $\Lambda^0$ each of whose elements is charged by exactly one ergodic SRB measure.\end{lemma}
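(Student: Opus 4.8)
The plan is to exploit the uniform size $\delta$ of the leaves $W^u_\delta(y)$ to run a compactness argument, with the absolute continuity of the stable holonomy supplying the mechanism that forces two nearby ``good'' leaves to belong to the same SRB measure. Fix $\epsilon$ as before and let $\delta>0$ be small (to be further constrained below). For an ergodic SRB measure $\mu$ with $\mu(B_\delta^-)>0$, let $G(\mu)$ denote the set of points that are forward generic for $\mu$; by Birkhoff's theorem $\mu(G(\mu))=1$, and I record two facts: (i) $G(\mu)$ is saturated by local stable manifolds, since two points on one local stable leaf have identical forward Birkhoff averages, and (ii) $G(\mu)\cap G(\mu')=\emptyset$ whenever $\mu\neq\mu'$. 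Call an unstable leaf $W^u_\delta(y)$ (with $y\in D^-_{\epsilon,l}$ for some $l$) a \emph{$\mu$-good leaf} if $G(\mu)$ has full Riemannian measure in $W^u_\delta(y)$. Because $\mu$ is an SRB measure its conditional measures on unstable leaves are equivalent to the Riemannian leaf volumes, so $G(\mu)$ is Riemannian-full on a.e.\ unstable leaf; combining this with $\mu(B_\delta^-\cap G(\mu))>0$ and disintegrating $\mu$ along unstable leaves, one finds a $\delta$-leaf $W^u_\delta(y)\subset B_\delta^-$ that is $\mu$-good.

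The core step is a local alternative: there is $d_0=d_0(\delta)>0$ such that if $W^u_\delta(y_1)$ is $\mu_1$-good, $W^u_\delta(y_2)$ is $\mu_2$-good, and $\rho(y_1,y_2)<d_0$, then $\mu_1=\mu_2$. To see this, the local product structure of Proposition \ref{(un)stable manifolds} provides, after shrinking, a holonomy map $h$ along local stable leaves from a positive-Riemannian-measure subset of $W^u_\delta(y_1)$ onto a positive-measure subset of $W^u_\delta(y_2)$; by the absolute continuity of the stable foliation (\cite{PeGHA}), both $h$ and $h^{-1}$ send Riemannian-null sets to Riemannian-null sets. Hence $h$ carries the full-measure set $G(\mu_1)\cap\mathrm{dom}(h)$, which by (i) lands inside $G(\mu_1)$, onto a full-measure subset of the range inside $W^u_\delta(y_2)$; so $G(\mu_1)$ meets $W^u_\delta(y_2)$ in positive Riemannian measure, while $G(\mu_2)$ is Riemannian-full there, contradicting (ii) unless $\mu_1=\mu_2$.

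The step I expect to be the main obstacle is making $d_0$ \emph{uniform}: the product-structure radii $\delta_i(z)$ in Proposition \ref{(un)stable manifolds} degenerate as $z$ runs through the strata $D^-_{\epsilon,l}$ with $l\to\infty$. The resolution is that a leaf $W^u_\delta(y)$ of fixed size $\delta$ can exist only when its centre $y$ lies in a fixed stratum $D^-_{\epsilon,l_0(\delta)}$ — the local unstable manifolds over $D^-_{\epsilon,l}$ have size $\gtrsim 1/l$, so a $\delta$-leaf forces $l\lesssim 1/\delta$ — and $D^-_{\epsilon,l_0(\delta)}$ is compact while the $\delta$-leaves over it form a family of uniformly bounded geometry and transversality to the stable cones; on such a family the holonomy and absolute-continuity estimates are uniform, yielding a single $d_0(\delta)>0$. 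Finiteness is then immediate: cover the compact set $\overline K$ by finitely many balls of radius $d_0(\delta)/2$; every good $\delta$-leaf has its centre in one of them, and all good leaves with centre in a common ball belong to one SRB measure, so at most finitely many ergodic SRB measures charge $B_\delta^-$. Together with Lemma \ref{magic delta} (every ergodic SRB measure charges $B_\delta^-$ for $\delta\le\delta_0$), this shows $f$ has only finitely many ergodic SRB measures $\mu_1,\dots,\mu_m$.

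For the partition, let $\Lambda^0_i$ be the union of all $\mu_i$-good $\delta$-leaves. Since distinct local unstable leaves are either equal or disjoint, facts (i)--(ii) force a leaf to be good for at most one of the $\mu_i$ and force the $\Lambda^0_i$ to be pairwise disjoint; moreover $\mu_i(\Lambda^0_i)>0$ because (as in the proof of Lemma \ref{magic delta} and the construction recalled in Section \ref{Preliminaries}) $\mu_i$ charges an entire positive-measure ``rectangle'' of good leaves, while $\mu_j(\Lambda^0_i)=0$ for $j\neq i$ by (ii). Setting $\Lambda^0=\bigsqcup_{i=1}^m\Lambda^0_i\subset B_\delta^-$ gives the desired finite partition, each element charged by exactly one ergodic SRB measure. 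That $\Lambda^0$ carries the full mass of any invariant measure is obtained by passing to ergodic components together with the fact that $B_\delta^-$ exhausts $D^-_\epsilon$ (which is full for every invariant measure) up to a set null for every invariant measure, and that, up to null sets, every $\delta$-leaf in $B_\delta^-$ is covered by good leaves; verifying this exhaustiveness cleanly — in particular ruling out, for every invariant measure, a non-negligible set of $\delta$-leaves on which no $\mu_i$ is generic-full — is the one remaining delicate point and may, if necessary, be handled by replacing the $\Lambda^0_i$ with their $f$-saturations.
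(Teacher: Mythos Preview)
Your approach is the same Hopf-type argument the paper uses --- forward Birkhoff averages are constant along stable leaves, SRB conditionals are equivalent to leaf Lebesgue, and absolute continuity of the stable holonomy propagates the ``good'' full-measure set from one $\delta$-leaf to a nearby one --- but two steps are not solid.

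First, the uniformity of $d_0$. Your claim that the centre $y$ of a $\delta$-leaf must lie in a fixed stratum $D^-_{\epsilon,l_0(\delta)}$ is plausible but unproved, and even granting it, the product-structure radii $\delta_i(z)$ in Proposition~\ref{(un)stable manifolds} are governed by the \emph{stable} leaf sizes as well, which depend on the forward orbit (i.e.\ on membership in $D^+_{\epsilon,l}$), not on $D^-_{\epsilon,l_0}$. So controlling only the unstable side does not obviously give a uniform holonomy radius. The paper sidesteps this entirely: instead of proving $d_0$ is uniform, it defines $\Lambda^0$ intrinsically (points whose $\delta$-leaf carries a full-measure set on which all $\phi_+$ are constant), shows each equivalence class $\{\phi_+(W^u_\loc(\cdot))=\text{const}\}$ is \emph{open} in $\Lambda^0$ (using whatever local product radius is available at each point), and then proves $\Lambda^0$ itself is \emph{closed} in $\overline K$. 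Open classes in a closed set are also closed, hence separated by some fixed distance, and compactness of $\overline K$ gives finiteness. This closedness-of-$\Lambda^0$ trick is the idea you are missing; it replaces your uniform-$d_0$ step and is what makes the compactness argument go through.

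Second, the full-measure claim. You correctly flag it as delicate, and your proposed fix (ergodic decomposition plus $f$-saturation) does not obviously handle an arbitrary invariant measure, which need not have absolutely continuous conditionals on unstable leaves. The paper's route is to pass through the auxiliary set $\hat\Lambda=\{\phi_-=\phi_+\ \forall\phi\}$, which has full measure for \emph{every} invariant measure by Birkhoff, and then use that $\phi_-$ is automatically constant on each unstable leaf to manufacture the required full-measure subset $A\subset W^u_\delta(y)$ on which $\phi_+$ is constant. This is how one gets the statement for all invariant measures, not just SRB ones.
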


\begin{proof}
	The proof is an adaptation of a Hopf argument. Define the subsets $\Lambda^+ \subset K$ and $\Lambda^- \subset \Lambda$ respectively to be the set of points where, for every $\phi \in C^0$, the limits
	\[
	\phi_+(x) =  \lim_{n \to \infty} \frac 1 n \sum_{k=0}^{n-1} \phi\left(f^{k}(x)\right) \quad \textrm{and} \quad \phi_-(x) = \lim_{n \to \infty} \frac 1 n \sum_{k=0}^{n-1}\phi\left(f^{-k}(x)\right)
	\]
	exist. By the Birkhoff ergodic theorem, both $\Lambda^+$ and $\Lambda^-$ have full measure with respect to any invariant measure. Observe that if $x \in \Lambda^-$ and $y \in W^u_\alpha(x)$ for $\alpha > 0$, then $\phi_-(y) = \phi_-(x)$, so $y \in \Lambda^-$, and so $W^u_\delta(x) \subseteq \Lambda^-$ for every $x \in \Lambda^-$. Similarly, $W^s_\alpha(x) \subseteq \Lambda^+$ for $x \in \Lambda^+$, $\alpha > 0$. 
	
	Recall that a point $x \in K$ lies in $B_\delta^-$ if and only if there is a $y = y(x) \in D^-_{\epsilon,l}$ for some $\epsilon,l$ for which $x \in W^u_\delta(y)$. So let $\Lambda^0$ be the set of points $x \in B_\delta^-$ for which there is a subset $A \subseteq W^u_\delta(y)$ of full conditional measure (with respect to Lebesgue) such that $A \subseteq \Lambda^+$ and $\phi_+|_A$ is constant for all $\phi \in C^0(K)$. 
	
	We make the following claims: 
	\begin{itemize} 
		\item the set $\Lambda^0$ has full measure in $B^-_\delta$ with respect to any invariant measure, and
		\item the set $\Lambda^0$ is closed.
	\end{itemize}
	Granting these claims for now, using the notation in the above paragraph, for $x \in \Lambda^0$, let $\phi_+(W^u_\textrm{loc}(x)) = \phi_+(z)$ for $z \in A \subseteq W^u_\delta(y)$, where $y = y(x)$ is as in the definition of $B_\delta^-$. We will say that $x, z \in \Lambda^0$ are \emph{equivalent}, and write $x \sim z$, if $\phi_+(W^u_\mathrm{loc}(x)) = \phi_+(W^u_{\mathrm{loc}}(z))$ for every continuous $\phi : K \to \R$. 
	
	Note that the stable foliation $W^s$ is absolutely continuous (see (\ref{W^s-def}) and Proposition \ref{stable-abscont} for the definition of $W^s$ and absolute continuity of the foliation). So if $x \in \Lambda^0$ and $z \in W^s(x) \cap \Lambda^0$, then $\phi_+(x) = \phi_+(z)$, and there is a set $A'$ of full measure in $W^u_\delta(y(z))$ on which $\phi_+$ is constant and equal to $\phi_+(W^u_\mathrm{loc}(x))$. So $x \sim z$ whenever $z \in W^s(x) \cap \Lambda^0$. 
	
	Suppose $\Lambda^0_1$ is an equivalence class, and let $x \in \Lambda_1^0$. We claim there is an $\epsilon > 0$ so that if $y \in \Lambda^0$ lies in the $\epsilon$-ball centered at $x$, then $x \sim y$. It will follow that each equivalence class is open in $\Lambda^0$, and therefore also closed in $\Lambda^0$ since the equivalence classes form a partition of $\Lambda^0$ and $\Lambda^0$ itself is closed. 
	
	To prove this claim, by virtue of Proposition 7 in \cite{PeGHA}, there is an $\epsilon > 0$ for which $B_\epsilon(x)$ has \emph{local hyperbolic product structure:} for $z \in W^u_\epsilon(x)$ and $y \in W^s_\epsilon(x)$, the intersection $W^u_\epsilon(y) \cap W^s_\epsilon(z)$ contains exactly one point, which we denote $[y,z]$. Let $y \in B_\epsilon(x) \cap \Lambda^0$, let $B^u_\epsilon(x)$ denote the ball in the local unstable manifold $W^u_\mathrm{loc}(x)$ centered at $x$ of size $\epsilon$, and let $\theta: B^u_\epsilon(x) \to W^u_\epsilon(y)$ denote the holonomy map $\theta(z) = [y,z]$. 
	
	To show $x \sim y$, let $A \subseteq B_\epsilon^u(x)$ be the set of points $z$ for which $\phi_+(z) = \phi_+(x)$ for every continuous function $\phi$. By definition of the set $\Lambda^0$, the set $A$ has full measure in $B_\epsilon^u(x)$. By absolute continuity of the stable foliation, $\theta(A)$ has full measure in $\theta(B^u_\epsilon(x)) \subset W^u_\epsilon(y)$. Since $\phi_+$ is constant on stable leaves, $\phi_+ \circ \theta = \phi_+$ for every continuous $\phi$. Therefore $\phi_+(z_1) = \phi_+(x)$ for almost every $z_1 \in \theta(B^u_\epsilon(x))$. Again, by definition of $\Lambda^0$, $\phi_+(z_1) = \phi_+(y)$ for every continuous $\phi$ and almost every $z_1 \in \theta(B_\epsilon^u(x))$. Therefore $\phi_+(x) = \phi_+(y)$, and so $x \sim y$. 
	
	It follows from these arguments that each equivalence class is open in $\Lambda^0$, and hence is also closed in $\Lambda^0$. By closedness of $\Lambda^0$ in $K$, there is an $\epsilon > 0$ such that each pair of equivalence classes is separated by a distance of at least $\epsilon$. By compactness of $K$, it follows that there may only be finitely many such equivalence classes. Because an ergodic SRB measure must charge exactly one of these equivalence classes, there may only be finitely many SRB measures. 
	
	It remains only to prove our previous two claims. Let $\hat\Lambda$ denote those points $x \in \Lambda^-$ such that $\phi_-(x) = \phi_+(x)$ for every continuous function $\phi$. By the Birkhoff ergodic theorem, $\hat\Lambda$ has full measure with respect to any invariant measure. Further, let $\hat\Lambda^0$ denote the set of points $x \in \hat\Lambda$ such that there is a set $A \subset W^u_\gamma(x)$ of full conditional measure such that $A \subseteq \Lambda^+$ and $\phi_-(z) = \phi_+(z)$ for every continuous function $\phi$ and all points $z \in A$. Here, $W^u_\gamma(x)$ is the connected component of $W^u(x)$ intersecting $\hat\Lambda$ and containing $x$ (and $W^s_\gamma(x)$ is defined similarly). Since $\phi_-(z)$ takes the same value for all $z \in W^u_\epsilon(x)$, $\phi_-|_A = \phi_+|_A$ is constant. 
	
	
	For $x \in \hat\Lambda^0$, the set $\hat\Lambda^0$ contains the union over $y \in W^s_\gamma(x) \cap B^-_\delta$ of manifolds $W^u_\epsilon(y)$ that contain a subset of full conditional measure (as this subset lies in $\hat\Lambda$, which has full measure). Because $W^s_\gamma(x)$ has full conditional measure in $\hat\Lambda$, it follows that this union also has full measure, from which it follows that $\hat\Lambda^0$ has full measure. 
	
	If a point $x$ has a negative semitrajectory that enters $\hat\Lambda^0$ infinitely often, then one can show $x \in \Lambda^0$. Since $\hat\Lambda^0$ has full measure, the set of points whose negative semitrajectories enter $\hat\Lambda^0$ also has full measure, so $\Lambda^0$ has full measure. This proves the first of the two previous claims. 
	
	To show that $\Lambda^0$ is closed, suppose $x$ is the limit point of a sequence $(x_n)$ in $\Lambda^0$. Since the stable foliation is absolutely continuous and $W^u_\epsilon(x_n)$ converges to $W^u_\epsilon(x)$ for $\epsilon > 0$ small, we can find a set $A \subset W^u_\delta(y)$ of full conditional measure, where $y = y(x)$ is as in the definition of $B_\delta^-$, such on which $\phi_+$ is constant for all continuous functions $\phi$. Therefore $x \in \Lambda^0$. 
\end{proof}


\begin{proof}[Proof of Theorem \ref{main-result}(a)] Note $B^- := \union_{\delta > 0} B^-_\delta$ is invariant under $f$, and as we showed in Lemma \ref{magic delta}, $\mu(B^-) > 0$ for every ergodic SRB measure $\mu$. Therefore $\mu(B^-) = 1$ for every ergodic SRB measure $\mu$, and since $\mu(D)=1$ as well, every ergodic SRB measure gives full volume to $D \cap B^-$. If there were infinitely many ergodic SRB measures, then by Lemma \ref{magic delta}, there would be a $\delta > 0$ for which $B^-_\delta$ is charged by infinitely many SRB measures. But this contradicts Lemma \ref{finitely many for each delta}.
\end{proof}

\subsection{Uniqueness and topological transitivity}\label{transitivity}

Generally speaking, a map satisfying (SH1) - (SH7) may admit more than one ergodic SRB measure (see examples in Section \ref{section-examples}). However, under moderate assumptions on the regularity of the stable foliation $W^s$, one can show that the components of topological transitivity of $f|_{\Lambda} : \Lambda \to \Lambda$ are in correspondence with the number of distinct ergodic SRB measures. We formalize this idea in this section. 

%

Given a metric space $X$, we will call a Borel measure $\mu$ on $X$ \emph{locally positive} if $\mu(U) > 0$ for every nonempty open subset $U \subset X$. A collection $\{U_i\}_{i \in I}$ of open subsets $U_i \subset X$, together with a collection of Borel measures $\{\mu_j\}_{j \in J}$ on $X$, with $J \subset I$, is an \emph{open partition by measures} if: 
\begin{enumerate}[label=(P\arabic*)]
	\item the open sets $\{U_i\}_{i \in I}$ are pairwise disjoint;
	\item $\overline{\union_{i \in I} U_i} = X$;
	\item $\mu_j \left(X \setminus U_j\right) = 0$ for all $j \in J$; and
	\item $\mu_j|_{U_j}$ is locally positive for all $j$. 
\end{enumerate}
By local positivity, we may assume $I \setminus J$ has a single element, which we denote $0 \in I$. If the open sets $\{U_i\}_{i \in I}$ and the measures $\{\mu_j\}_{j \in J}$ are chosen so that $I = J$, we say that the open partition by measures is \emph{complete}. If $f : X \to X$ is continuous, and the measures $\mu_j$ are ergodic probability measures, we call an open partition by the ergodic measures $\mu_j$ an \emph{open ergodic partition} if in addition to (P1) - (P4) above, we also have
\begin{enumerate}[label=(P\arabic*)]
	\setcounter{enumi}{4}
	\item $\mu(U_0) = 0$ for any ergodic measure $\mu$. 
\end{enumerate}

\begin{lemma}\label{erg-comps}
	If $X$ is a complete metric space, and $f : X \to X$ is a continuous and open map admitting an open ergodic partition by the open sets $\{U_i\}_{i \in I}$ and the ergodic $f$-invariant Borel measures $\{\mu_j\}_{j \in I \setminus \{0\}}$, then $\overline U_i$ is $f$-invariant for each $i$. If the open ergodic partition is complete, then $f|_{\overline U_i}$ is topologically transitive for each $i \in I$. 
\end{lemma}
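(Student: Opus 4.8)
The plan is to reduce the invariance assertion to the elementary fact that the support of an invariant measure is forward–invariant, to dispose of the exceptional index with (P5), and then to extract topological transitivity in the complete case from a one–line ergodicity argument. The first observation is that (P3) and (P4) force $\operatorname{supp}\mu_i=\overline{U_i}$ for every index $i$ that carries a measure: (P3) gives $\operatorname{supp}\mu_i\subseteq\overline{U_i}$, while (P4) says every nonempty open subset of $U_i$ is $\mu_i$–positive, so $U_i\subseteq\operatorname{supp}\mu_i$ and hence $\overline{U_i}\subseteq\operatorname{supp}\mu_i$ since the support is closed; in particular $\mu_i(\overline{U_i})=1$ and these closures are pairwise distinct.

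Next, for such an $i$, I would prove $f(\overline{U_i})\subseteq\overline{U_i}$ by the usual argument: if $x\in\operatorname{supp}\mu_i$ had $f(x)\notin\operatorname{supp}\mu_i$, pick an open $V\ni f(x)$ with $\mu_i(V)=0$; then $f^{-1}(V)$ is an open neighbourhood of $x$ with $\mu_i(f^{-1}(V))=\mu_i(V)=0$ by invariance, contradicting $x\in\operatorname{supp}\mu_i$. The remaining index $0$ needs a separate treatment, since no measure is supported near $U_0$; here I would use (P5), which via ergodic decomposition says no $f$–invariant measure charges $U_0$. It suffices to check that $f(U_0)\cap U_j=\emptyset$ for every $j\neq 0$: granting this, $f(U_0)$ is an open subset of $X=\overline{U_0\cup\bigcup_{j\neq 0}U_j}$ that avoids all the $U_j$, so every point of $f(U_0)$ lies in $\overline{U_0}$, whence $f(U_0)\subseteq\overline{U_0}$ and then $f(\overline{U_0})\subseteq\overline{f(U_0)}\subseteq\overline{U_0}$ by continuity. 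If instead $f(U_0)\cap U_j\neq\emptyset$ for some $j$, set $V_0=U_0\cap f^{-1}(U_j)$, which is nonempty and open; then $f(V_0)$ is a nonempty open subset of $U_j$ (using that $f$ is open), so $\mu_j(f(V_0))>0$ by (P4), whereas $\mu_j(V_0)=0$ because $V_0\subseteq U_0$. Since $f$ is injective, $f^{-1}(f(V_0))=V_0$, so invariance yields $\mu_j(f(V_0))=\mu_j(V_0)$, a contradiction. This establishes that $\overline{U_i}$ is $f$–invariant for every $i$.

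For the second assertion, assume the partition is complete, so there is no exceptional set $U_0$ and every $\overline{U_i}=\operatorname{supp}\mu_i$ is $f$–invariant by the previous paragraph. Then $f|_{\overline{U_i}}$ is a continuous self–map of $\overline{U_i}$ preserving the ergodic probability measure $\mu_i$, whose support is all of $\overline{U_i}$ (one checks readily that ergodicity on $X$ passes to ergodicity of $f|_{\overline{U_i}}$ because $\mu_i(X\setminus\overline{U_i})=0$). Given nonempty open $V,W\subseteq\overline{U_i}$ we have $\mu_i(V),\mu_i(W)>0$; applying the Birkhoff ergodic theorem to the indicator $\mathbf{1}_W$, the set of points whose forward orbit enters $W$ infinitely often has full $\mu_i$–measure, hence meets $V$, so $f^n(V)\cap W\neq\emptyset$ for some $n\geq 1$. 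This is topological transitivity of $f|_{\overline{U_i}}$; if one wants an explicit transitive point, intersecting over a countable base of $\overline{U_i}$ (which is separable in the applications) produces a $\mu_i$–generic point with dense forward orbit.

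I expect the invariance of $\overline{U_0}$ to be the only delicate step: with no local measure available, the contradiction has to be imported from a measure $\mu_j$ living on a different piece of the partition, and this is the one place where both openness of $f$ (to keep $f(V_0)$ open, so that (P4) applies) and injectivity of $f$ (to identify $f^{-1}(f(V_0))$ with $V_0$, so that invariance actually bites) are essential — the latter being the reason the statement is in effect about homeomorphisms, which is how the lemma is applied. Everything else is routine.
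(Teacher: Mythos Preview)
Your proof is correct and in fact more carefully argued than the paper's very terse version. For $i\neq 0$ both you and the paper obtain invariance of $\overline{U_i}$ from the identification $\overline{U_i}=\mathrm{supp}\,\mu_i$; the paper compresses this into one sentence citing (P1)--(P4). For topological transitivity the routes diverge slightly: the paper observes that $\bigcup_n f^n(V)$ is open and $f$-invariant, so by ergodicity it has $\mu_i$-measure $0$ or $1$, and local positivity then forces it to meet any nonempty open $V'$; you instead apply Birkhoff to $\mathbf 1_W$. Both are standard and essentially equivalent one-liners. The one place you genuinely go beyond the paper is the exceptional index $0$: the paper does not single it out at all, whereas you supply a separate argument and correctly identify that it needs injectivity of $f$. (Incidentally, your contradiction there actually runs on (P1), (P3), (P4) and injectivity alone; (P5) is not used.) Since the paper's own transitivity step, written with $\bigcup_{n\in\Z} f^n(V)$, also tacitly treats $f$ as invertible, your caveat that the lemma is in effect about homeomorphisms is well taken and matches how the lemma is applied.
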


\begin{proof}
	By (P1) - (P4) and the fact that each measure $\mu_j$ is $f$-invariant, $U_i$ is invariant for every $i$, as is $\overline{U}_i$. Now suppose the open ergodic partition is complete. If $V, V' \subset \overline U_i$ are open, then $\union_{n \in \Z} f(V)$ is open and $f$-invariant. Thus if $f^n(V) \cap V' = \emptyset$ for every $n \geq 0$, ergodicity of $\mu_i$ implies that either $\mu_i(V') = 0$ or $\mu_i(V) = 0$. By local positivity, either $V' = \emptyset$ or $V = \emptyset$.
\end{proof}

Theorem \ref{main-result}(b) is now an immediate consequence of the following lemma. 

\begin{lemma}\label{SRB-uniqueness}
	If $f : K \subset N \to K$ is a singular hyperbolic map with attractor $\Lambda \subset K$ satisfying condition (SH8) in addition to (SH1) - (SH7), then $\Lambda$ admits a finite and complete open ergodic partition, and the measures defining this partition are SRB measures. If in addition $f|_\Lambda : \Lambda \to \Lambda$ is topologically transitive, then the open ergodic partition consists of a single open set and a single measure, and thus in particular, $f$ admits a unique ergodic SRB measure. 
\end{lemma}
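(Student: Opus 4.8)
The plan is to use the finite partition $\{\Lambda^0_1, \ldots, \Lambda^0_N\}$ of $\Lambda^0 \subset B_\delta^-$ into equivalence classes produced in the proof of Lemma \ref{finitely many for each delta}, together with the local continuity hypothesis (SH8), to manufacture the open sets $U_1, \ldots, U_N$ and then invoke Lemma \ref{erg-comps}. Recall from Lemma \ref{finitely many for each delta} that each equivalence class $\Lambda^0_i$ is both open and closed in $\Lambda^0$, that distinct classes are separated by a fixed distance $\ep_0 > 0$, and that $\Lambda^0$ has full measure with respect to every invariant measure (in particular every ergodic SRB measure charges exactly one class). For each $i$ with $1 \le i \le N$ set $U_i = \{x \in \Lambda : \rho(x, \Lambda^0_i) < \ep_0/2\}$ intersected with $\Lambda$; by the $\ep_0$-separation these are pairwise disjoint open subsets of $\Lambda$. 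Let $\mu_i$ be the unique ergodic SRB measure charging $\Lambda^0_i$ (each class is charged by at most one, and by existence, Proposition \ref{existence of SRB}, and Lemma \ref{magic delta}, at least one SRB measure exists and they are in bijection with the charged classes; reindex so that exactly $U_1,\ldots,U_n$ carry SRB measures and the remaining classes, if any, together with the complement form $U_0$).

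Next I would verify properties (P1)--(P5) of an open ergodic partition. (P1) is the disjointness just noted. For (P2), I would argue that $\overline{\union_i U_i} \supseteq \overline{\Lambda^0} = \Lambda$: indeed $\Lambda^0$ has full measure for every invariant measure and $D$ is dense in $\Lambda = \overline D$, and every invariant measure (in particular SRB measures, whose supports cover a dense subset via the construction in Section \ref{Preliminaries}) gives full mass to $\Lambda^0$, so $\Lambda^0$ is dense; hence its closure is $\Lambda$, and since $\union_i \Lambda^0_i = \Lambda^0 \subseteq \union_i U_i$ we get (P2). For (P3), each ergodic SRB $\mu_j$ charges only its own class $\Lambda^0_j \subset U_j$ and gives $\Lambda \setminus \Lambda^0$ measure zero, so $\mu_j(\Lambda \setminus U_j) = 0$. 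For (P4), local positivity of $\mu_j|_{U_j}$: given a nonempty open $V \subset U_j$, I would use that $\mu_j$ is SRB, hence its conditional measures on unstable leaves are absolutely continuous, and that $\mathrm{supp}(\mu_j) \cap U_j$ has nonempty interior relative to the hyperbolic product structure — here is where (SH8), local continuity of $W^s$, is essential, since it guarantees the stable holonomies used to saturate an unstable ball into a positive-measure rectangle vary continuously and hence the support of $\mu_j$ restricted to $U_j$ is genuinely "thick" (open-dense) in $U_j$ rather than merely positive-measure. Finally (P5): any ergodic (not necessarily SRB) measure also gives $\Lambda^0$ full measure by the Birkhoff-theoretic part of Lemma \ref{finitely many for each delta}, hence charges some $\Lambda^0_i$ with $i \geq 1$, so $\mu(U_0) = 0$.

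With (P1)--(P5) in hand, Lemma \ref{erg-comps} (applied with $X = \Lambda$, which is compact hence complete, and $f|_\Lambda$ continuous and open on $\Lambda$) shows each $\overline{U_i}$ is $f$-invariant, and completeness of the partition (which we arrange by absorbing $U_0$, necessarily a set charged by no ergodic measure and hence — by (P5) together with the fact that $\mu_i|_{U_i}$ is locally positive so $U_0$ has empty interior after enlarging the $U_i$ to be maximal — actually empty or reindexable away) gives topological transitivity of $f|_{\overline{U_i}}$ for each $i$. The relabeled sets $U_1, \ldots, U_n$ are exactly the open sets of Theorem \ref{main-result}(b). If moreover $f|_\Lambda$ is topologically transitive, then $\Lambda$ cannot be partitioned into two or more $f$-invariant closed sets with nonempty interior, forcing $n = 1$ and a unique ergodic SRB measure; conversely uniqueness plus local positivity of that single measure on the single $U_1 = \Lambda$ gives transitivity via the argument in Lemma \ref{erg-comps}. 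I expect the main obstacle to be (P4): showing $\mu_j|_{U_j}$ is \emph{locally} positive (not just positive on $U_j$), which requires leveraging (SH8) to propagate absolute continuity along stable holonomies throughout a neighborhood, and carefully checking that the closure of the support of each $\mu_j$ fills $\overline{U_j}$ so that the "open partition" genuinely has dense union; a secondary subtlety is confirming $f|_\Lambda$ is an open map, which should follow from (SH1) and (SH4) on $K \setminus N^+$ together with invariance of $D$.
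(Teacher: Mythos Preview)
Your approach is genuinely different from the paper's and has real gaps. The paper's proof is essentially a citation: it invokes Theorems~4 and~6 of \cite{PeGHA} to obtain the complete open ergodic partition by SRB measures (this is where (SH8) enters, inside that reference), then uses Theorem~\ref{main-result}(a) for finiteness and Lemma~\ref{erg-comps} for the transitivity consequence. You instead try to build the open sets $U_i$ by hand as $\ep_0/2$-neighborhoods of the equivalence classes $\Lambda^0_i$ from Lemma~\ref{finitely many for each delta}. This is an interesting idea, but two steps do not go through as written.

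The main failure is (P4). Your sets $U_i$ are defined purely metrically, so there is no reason for $\mu_i$ to be locally positive on them: a ball in $U_i$ near the boundary of the $\ep_0/2$-neighborhood may meet $\Lambda$ only in points whose unstable leaves are disjoint from $\mathrm{supp}\,\mu_i$. Your sketch (``the support is thick by (SH8)'') does not address this, because (SH8) gives continuity of the \emph{stable} lamination, not control over how unstable leaves fill out a metric neighborhood. In \cite{PeGHA} the open sets are constructed differently---essentially as stable-and-unstable saturations---so that local positivity is automatic; your metric neighborhoods do not inherit that structure. A related but distinct gap is completeness: your argument that $U_0$ is empty (``absorbing $U_0$\ldots\ actually empty or reindexable away'') is not an argument. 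You need to show that every nonempty open subset of $\Lambda$ meets the support of some ergodic SRB measure, and nothing in Lemma~\ref{finitely many for each delta} gives this; it is again a consequence of the construction in \cite{PeGHA} that the paper is citing. Your (P2) argument has the same defect: ``full measure for every invariant measure'' does not imply density in $\Lambda$, since there could be open subsets of $\Lambda$ not charged by any invariant measure at all. Finally, you assume $f|_\Lambda$ is open in order to apply Lemma~\ref{erg-comps}, but this is not established anywhere and is not obvious for a map with singularities.
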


\begin{proof}
	The fact that $\Lambda$ admits a complete open ergodic partition by SRB measures follows from Theorems 4 and 6 of \cite{PeGHA}. Finiteness of this partition follows from Theorem \ref{main-result}(a). If $f|_{\Lambda}$ is topologically transitive, by Lemma \ref{erg-comps}, there is at most one ergodic SRB measure; existence of this measure follows from Proposition \ref{existence of SRB}. 
\end{proof}

\section{Examples}\label{section-examples}

Maps described in Theorem \ref{main-result} do exist, and as the following non-example will demonstrate, the hypotheses described in this result are necessary assumptions in general. Examples of singular hyperbolic attractors can be found in, for example, Lorenz-type attractors; but this class of singular hyperbolic maps includes cases where the singular set has countably many components, and admit countably many SRB measures. 

\subsection{Lorenz-type attractors}

To begin, we describe the class of singular hyperbolic attractors generated by Lorenz-type maps of the unit square. The definition of these maps is as follows. Let $I = (-1,1)$, $K = I^2$, and $-1 = a_0 < a_1 < \cdots < a_m < a_{m+1} = 1$. Define the rectangles $P_i = I \times (a_i, a_{i+1})$ for $0 \leq i \leq m$, and $N = I \times \{a_0, \ldots, a_{m+1}\}$. Let $f : K \setminus N \to K$ be an injective map given by 
\[
f(x,y) = \big( \phi(x,y), \, \psi(x,y)\big), \quad x, y \in I, 
\]
where the functions $\phi, \psi : K \to \R$ satisfy the following conditions: 
\begin{enumerate}[label=(L\arabic*)]
	\item $\phi$ and $\psi$ are continuous in $\overline P_i$ for each $i$, and:
	\begin{align*}
	\lim_{y \to a_i^+} \phi(x,y) = \phi_i^+, &\quad \lim_{y \to a_i^+} \psi(x,y) = \psi_i^+, \\
	\lim_{y \to a_i^-} \phi(x,y) = \phi_i^-, &\quad \lim_{y \to a_i^-} \psi(x,y) = \psi_i^-,
	\end{align*}
	where $\phi_i^{\pm}$, $\psi_i^\pm$ do not depend on $x$ for each $i$; 
	\item $\psi$ and $\phi$ have two continuous derivatives in $P_i$. Furthermore, there are positive constants $B_i^1$, $B_i^2$, $C_i^1$, and $C_i^2$; constants $0 \leq \nu_i^1, \nu_i^2, \nu_i^3, \nu_i^4 \leq 1$; a sufficiently small constant $\gamma > 0$; and continuous functions $A_i^1(x,y)$, $A_i^2(x,y)$, $D_i^1(x,y)$, and $D_i^2(x,y)$ that tend to zero uniformly over $x$ as $y \to a_i$ or $y \to a_{i+1}$; so that for $(x,y) \in P_i$, 
	\[
	\left.\begin{array}{l}
	d\phi(x,y) = B_i^1(y - a_i)^{-\nu_i^1}\left(1+A_i^1(x,y)\right) \\ d\psi(x,y) = C_i^1(y-a_i)^{-\nu_i^2}\left(1+D_i^1(x,y)\right)
	\end{array}\right\} \quad \textrm{if } y-a_i \leq \gamma;
	\]
	\[
	\left.\begin{array}{l}
	d\phi(x,y) = B_i^2(a_{i+1} - y)^{-\nu_i^3}\left(1+A_i^2(x,y)\right) \\ d\psi(x,y) = C_i^2(a_{i+1}-y)^{-\nu_i^4}\left(1+D_i^2(x,y)\right)
	\end{array}\right\} \quad \textrm{if } a_{i+1} - y \leq \gamma;
	\]
	and additionally, $\norm{\phi_{xx}}, \norm{\psi_{xx}}, \norm{\phi_{xy}}, \norm{\psi_{xy}} \leq \mathrm{const.}$; 
	\item the following inequalities hold: 
	\begin{align*}
	\norm{f_x}, \norm{g_y^{-1}} &< 1; \\
	1-\norm{g_y^{-1}}\norm{f_x} &> 2\sqrt{\norm{g_y^{-1}}\norm{g_x}\norm{g_y^{-1}f_y}}; \\
	\norm{g_y^{-1}} \norm{g_x} &< \left(1-\norm{f_x}\right)\left(1-\norm{g_y^{-1}}\right);
	\end{align*}
	where $\norm{\cdot} = \max_i \sup_{(x,y) \in P_i} |\cdot|$. 
\end{enumerate}

This class of maps includes the \emph{geometric Lorenz attractor}, for which we have $m=1$, $a_1 = 0$, and
\begin{align*}
\phi(x,y) &= \left( -B|y|^{\nu_0} + Bx\,\sgn(y)|y|^\nu+1\right) \sgn(y), \\
\psi(x,y) &= \left((1+A)|y|^{\nu_0} - A\right)\sgn(y),
\end{align*}
where $0 < A < 1$, $0 < B < \frac 1 2$, $1/(1+A) < \nu_0 < 1$, and $\nu > 1$. 

\begin{theorem}\label{Lorenz-type theorem}
	Let $f : I^2 \setminus N \to I^2$ be a Lorenz-type map for which one of the following properties hold: 
	\begin{enumerate}[label=(\alph*)]
		\item $\nu_i^j = 0$, for $i = 1, \ldots, m$ and $j = 1, 2, 3, 4$; 
		\item $\rho\left(f^n(\phi_i^\pm, \psi_i^\pm), N\right) \geq C_i e^{-\gamma n}$ (where $C_i$ are constants independent of $n$ and $\gamma > 0$ is sufficiently small).
	\end{enumerate}
	Then $f$ admits a singular hyperbolic attractor $\Lambda$, which is supported by at most finitely many ergodic SRB measures. 
\end{theorem}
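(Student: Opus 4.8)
The plan is to verify that any Lorenz-type map satisfying (a) or (b) fits into the framework of Theorem \ref{main-result}, i.e.\ that it satisfies conditions (SH1)--(SH7), and then invoke Theorem \ref{main-result}(a) to conclude that $\Lambda$ is charged by at most finitely many ergodic SRB measures. Condition (SH1) is immediate from (L1)--(L2): on each rectangle $P_i$, the functions $\phi,\psi$ are $C^2$ and $f$ is injective, hence a $C^2$ diffeomorphism onto its image. Condition (SH3) is the most transparent: the singular set $N = I \times \{a_0,\dots,a_{m+1}\}$ is a disjoint union of finitely many horizontal segments, each of which is a one-dimensional embedded submanifold with boundary, and since $f$ contracts the horizontal direction and expands the vertical direction, the unstable foliation $W^u$ is (nearly) vertical with one-dimensional leaves, so its codimension equals $1 = \dim N_i$, and the leaves cross the horizontal segments $N_i$ transversally by the cone estimates. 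Conditions (SH4) and (SH5) are built into (L1)--(L3): continuity and differentiability off $N^+$ come from (L1)--(L2), and the existence of invariant stable/unstable cone families is precisely what inequalities (L3) guarantee --- this is the standard Afraimovich--Bykov--Shilnikov-type cone construction, where (L3) ensures one can choose a constant horizontal cone mapped strictly inside itself under $df^{-1}$ and a vertical cone mapped strictly inside itself under $df$, with uniform expansion/contraction rates. Condition (SH6) --- continuity of $z \mapsto C^u(z)$ up to $\overline{K_i}$ and a uniform lower bound $\angle(v,w) \ge \alpha$ for $v \in C^u(z)$, $w \in T_z N$ --- follows because the unstable cone can be taken to be a fixed vertical cone (its boundary slopes are controlled by the constants in (L3), not by the blow-up terms), while $T_z N$ is horizontal, so the angle is bounded below by a constant depending only on the cone aperture.

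The two genuinely delicate points are (SH2) (the second-derivative / distortion bound near $N^+$) and (SH7) (the singular set and its images stay separated for a finite time with the right rate relative to $\lambda$), and it is precisely here that the dichotomy (a) vs.\ (b) enters. For (SH2): from (L2), the first derivatives of $\phi,\psi$ behave like $(y-a_i)^{-\nu}$ near $a_i$, so the second derivatives in the $y$-direction behave like $(y-a_i)^{-\nu - 1}$, which is of the form $C \rho(x,N^+)^{-\alpha_1}$ with $\alpha_1 = \max_{i,j}\nu_i^j + 1$; the mixed and pure-$x$ second derivatives are bounded by (L2). In case (a), all $\nu_i^j = 0$, so in fact $\alpha_1$ can be taken to be $1$ (or even the derivatives are bounded, giving $\alpha_1 = 0$ if we are careful), and the analogous bound for $df^{-1}$ holds near $N^-$ by the inverse function theorem together with the uniform hyperbolicity. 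In case (b), the derivatives may blow up, but the hypothesis $\rho(f^n(\phi_i^\pm,\psi_i^\pm),N) \ge C_i e^{-\gamma n}$ controls the trajectories of the limit points $(\phi_i^\pm,\psi_i^\pm)$ of $N^-$: since $N^- \subset \{(\phi_i^\pm,\psi_i^\pm)\}$ by (L1) (the limits $\phi_i^\pm,\psi_i^\pm$ do not depend on $x$, so the image of a horizontal segment under $f$ from either side is a single point), condition (b) says exactly that the forward orbit of $N^-$ stays exponentially away from $N$ with a slow rate $\gamma$; choosing $\gamma$ small relative to $\log\lambda$ then yields (SH7), and a corresponding argument — using the exponential contraction of the $y$-direction under $f$ and expansion under $f^{-1}$ — gives (SH2) for $df^{-1}$ along orbits that matter.

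The main obstacle I expect is verifying (SH7) cleanly, i.e.\ showing $f^j(N^-) \cap N^+ = \emptyset$ for $0 \le j < k$ with $\lambda^k > 2$. The subtlety is that $N^- = \{(\phi_i^\pm,\psi_i^\pm) : i\}$ is a finite set of points (a consequence of the $x$-independence of the limits in (L1)), so the orbit of $N^-$ is a finite union of orbits of points; in case (a) one must check that these finitely many orbits avoid the finitely many horizontal lines $\{y = a_i\}$ for the first $k$ steps, which should follow from genericity of the $a_i$ or can be arranged by a small perturbation — alternatively, in case (a) the distortion is already bounded, so (SH2) holds with $\alpha_1 = \alpha_2 = 0$ and the proof of Theorem \ref{main-result}(a) goes through with $k$ replaced by any integer with $\lambda^k > 2$ and no further orbit-avoidance needed beyond what compactness gives (one re-examines the proof of Lemma \ref{magic delta} to see that bounded distortion makes the construction of the nested rectangles work regardless of which $N_i$ is hit). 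In case (b), (SH7) with the stated rate is essentially the hypothesis, so the work is in unwinding $N^- = \{(\phi_i^\pm,\psi_i^\pm)\}$ and checking that $e^{-\gamma n}$ decay with small $\gamma$ forces separation for the required finite time. Once (SH1)--(SH7) are in hand, Theorem \ref{main-result}(a) applies verbatim and the finiteness of the number of ergodic SRB measures follows; I would close the proof by remarking that in both cases the attractor $\Lambda$ is genuinely singular hyperbolic (Definition \ref{main-def}) because (L3) provides $\lambda > 1$ and the required cone invariance, so Proposition \ref{existence of SRB} also guarantees at least one ergodic SRB measure.
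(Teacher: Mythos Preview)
Your overall strategy matches the paper's exactly: verify (SH1)--(SH7) for the Lorenz-type map and then invoke Theorem~\ref{main-result}(a). The paper's proof, however, is extremely short: it verifies (SH3) directly (exactly as you do, noting that $N$ is a finite union of horizontal segments) and defers everything else --- (SH1), (SH2), and (SH4)--(SH7) --- to Theorem~17 of \cite{PeGHA}, where these conditions are established for Lorenz-type maps under hypothesis (a) or (b). So where you attempt to re-derive each condition from (L1)--(L3), the paper simply cites the earlier work.

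Your direct verification is largely sound for (SH1)--(SH6), but your treatment of (SH7) in case~(a) has a real gap. You correctly identify that $N^-$ is the finite set $\{(\phi_i^\pm,\psi_i^\pm)\}$ and that (SH7) asks the first $k$ iterates of these points to miss $N$; but then you appeal to ``genericity of the $a_i$'' or ``a small perturbation,'' which is not legitimate --- the theorem is stated for \emph{all} Lorenz-type maps satisfying (a), not generic ones. Your fallback (re-examining the proof of Lemma~\ref{magic delta} using bounded distortion when all $\nu_i^j = 0$) is an interesting idea, but it amounts to modifying the proof of Theorem~\ref{main-result} rather than applying it, and you do not carry it out. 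The paper sidesteps this entirely by citing \cite{PeGHA}; if you want a self-contained argument, you would need to actually prove (SH7) in case~(a), which is where the content of \cite{PeGHA}, Theorem~17 lies.
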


\begin{remark}
	This result is also proven in \cite{APPV}, and is also a consequence of the arguments in both \cite{Asectional} and \cite{Sat92}. We present an additional proof of this result using Theorem \ref{main-result} directly. 
\end{remark}

\begin{proof}
	Properties (SH1) and (SH4)-(SH7) are shown in \cite{PeGHA}, Theorem 17. The singular set $N$ is the disjoint union of finitely many horizontal lines $I \times \{a_i\}$, $i = 1, \ldots, m$, so (SH3) is satisfied. The statement now follows from Theorem \ref{main-result}.
\end{proof}

\begin{remark}
	Condition (SH4) is easy to verify for the geometric Lorenz attractor, as the map $\phi : I^2 \setminus \left(I\times\{0\}\right) \to \R$ extends to $\pm 1$ as $y \to 0$ from above or below. In particular, $N^- \cap K = \emptyset$, since the continuations of $\phi$ to $N$ map $N$ to the boundary of $K$, so (SH4) is in fact trivial. Moreover, this is true with any Lorenz-type attractor for which $\phi_i^\pm = \pm 1$ or $\mp 1$. 
	
	More generally, (SH4) holds if (b) is satisfied in the statement of Theorem \ref{Lorenz-type theorem}, provided $\gamma>0$ is sufficiently small. 
\end{remark}

\subsection{The Belykh attractor} We consider a map $f : K \setminus N \to K$, where $K = [-1,1]^2$, and
\[
N = \{(x, kx) \in K : -1 < x < 1\}
\]
where $|k| < 1$. (More generally one can consider $N = \{(x, h(x)) : -1 < x < 1\}$ for a continuous function $h$.) We then choose constants $\lambda_1, \lambda_2, \mu_1, \mu_2$ so that
\[
0 < \lambda_1, \mu_1 < \frac 1 2 \quad \textrm{and} \quad 1 < \lambda_2, \mu_2 < \frac{2}{1+|k|},
\]
and define the map $T : K \setminus N \to \R^2$ by
\[
T(x,y) = \begin{cases}
\left( \lambda_1(x-1)+1, \: \lambda_2(y-1)+1\right) & \textrm{if } y > kx; \\
\left( \mu_1(x+1)-1,\: \mu_2(y+1)-1\right) &\textrm{if } y < kx.
\end{cases}
\]
This map was first introduced in \cite{Belykh} as a model of phase synchronization theory. 

\begin{theorem} Define $T : K \setminus N \to \R^2$ as above. 
	\begin{enumerate}[label=(\alph*)]
		\item The map $T$ is a map from $K\setminus N$ into $K$, and satisfies conditions (SH1) and (SH3)-(SH6). 
		\item For any choice of $\lambda_2 > 1$, and for all but countably many $\mu_2 > 1$ (the countably many exceptions depending on $\lambda_2$), there is a $\delta > 0$ so that $T$ satisfies (SH7) when $|k| < \delta$, and thus admits finitely many ergodic SRB measures for such $k$. 
	\end{enumerate}
\end{theorem}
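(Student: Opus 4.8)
For part (a), the verification is direct. That $T$ maps $K\setminus N$ into $K$ follows from the parameter constraints: on the region $\{y\ge kx\}$ the first branch is affine with linear part $\mathrm{diag}(\lambda_1,\lambda_2)$ and fixed point $(1,1)$, so its first coordinate lies in $[1-2\lambda_1,1)\subset(0,1)$ because $\lambda_1<1/2$, while its second coordinate lies in $[\lambda_2(kx-1)+1,\,1]\subset[-1,1]$ precisely because $\lambda_2<2/(1+|k|)$ (here $1-kx\le 1+|k|$); the second branch is the image of the first under $(x,y)\mapsto(-x,-y)$, $(\lambda_i)\leftrightarrow(\mu_i)$. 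Since the two branch images lie in $\{x>0\}$ and $\{x<0\}$ respectively, $T$ is globally injective, and being affine on each of the two components $K_1,K_2$ of $K\setminus N^+$ it is a $C^\infty$ diffeomorphism there; this gives (SH1), and likewise (SH4). For (SH3), $N$ is a single embedded $1$-submanifold and $1=\dim K-\dim W^u$ is the codimension of the (vertical) unstable foliation. For (SH5)--(SH6) one takes the constant families $C^s(z)$, $C^u(z)$ equal to fixed cones of small half-angle $\theta$ about the horizontal and vertical directions; cone invariance, the uniform angle bound between them, and the uniform expansion/contraction estimates --- that is, conditions (a)--(c) of Definition~\ref{main-def} --- all follow from $\lambda_1,\mu_1<1<\lambda_2,\mu_2$; the extension $z\mapsto C^u(z)$ is trivially continuous on each $\overline K_i$; and since $T_zN=\R(1,k)$ makes angle $<45^\circ$ with the horizontal when $|k|<1$, fixing $\theta<45^\circ$ yields $\angle(v,w)\ge 90^\circ-\arctan|k|-\theta>0$ for all $v\in C^u(z)$, $w\in T_zN$, which is the transversality clause of (SH6).

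For part (b), first note that $\lambda:=\inf_{x\in K\setminus N^+}\norm{dT_x}=\min(\lambda_2,\mu_2)\in(1,2)$, so (SH7) requires only $T^j(N^-)\cap N^+=\emptyset$ for $0\le j<k_0$, where $k_0=k_0(\lambda_2,\mu_2)$ is the least integer with $\lambda^{k_0}>2$. Because $T$ is piecewise affine with two branches, each $T^j$ is piecewise affine with at most $2^j$ branches, and $N^-$ --- the set of one-sided limit images of the discontinuity set $N^+=N\cup\del K$ --- is a finite union of affine segments (the images of $N$ under the two branches, together with the images of those portions of $\del K$ that get pushed into the interior). Consequently each $T^j(N^-)$, $0\le j<k_0$, is a finite union of affine segments whose supporting lines and endpoints depend affinely on $\lambda_2$, $\mu_2$, and $k$.

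The plan is to verify (SH7) first at $k=0$ and then perturb. At $k=0$ the singular line is horizontal and the dynamics decouples in the second coordinate: the $y$-component of $T$ is the piecewise-linear expanding self-map $g$ of $[-1,1]$ given by $g(y)=\lambda_2(y-1)+1$ for $y>0$ and $g(y)=\mu_2(y+1)-1$ for $y<0$, and the obstruction set $N^+$, read in the $y$-coordinate, is $\{0,\pm1\}$. Thus (SH7) at $k=0$ becomes the requirement that a certain finite set $H=H(\lambda_2,\mu_2)$ of critical heights --- the $y$-coordinates of the segments comprising $N^-$ --- satisfies $g^j(h)\notin\{0,\pm1\}$ for every $h\in H$ and $0\le j<k_0$. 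Fix $\lambda_2$. On each of the countably many subintervals of $(1,2)$ on which $k_0$ --- equivalently $\lceil\log 2/\log\min(\lambda_2,\mu_2)\rceil$ --- is constant, this is a finite list of conditions, and each individual condition is a nontrivial real-analytic (indeed piecewise-affine) equation in the single real parameter $\mu_2$, hence fails on an at most countable, locally finite set of $\mu_2$. Taking the union over these subintervals, the total set of exceptional $\mu_2$ is countable. For $\mu_2$ outside it, (SH7) holds at $k=0$; since (SH7) is a finite conjunction of disjointness conditions between a compact set and a closed set, and all the sets involved move continuously with $k$, there is $\delta>0$ --- shrunk if necessary so that $2/(1+|k|)>\mu_2$, keeping $\mu_2$ admissible --- such that (SH7) persists for $|k|<\delta$. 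Finally, since $d^2T\equiv 0$ and $d^2T^{-1}\equiv 0$, condition (SH2) holds trivially; with part (a) this places $T$ within the hypotheses of Theorem~\ref{main-result}(a), which yields at most finitely many ergodic SRB measures.

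The step I expect to be the main obstacle is the combinatorial bookkeeping in part (b): one must identify $N^-$ and its first $k_0$ forward iterates as explicit finite unions of affine segments --- tracking the branch itinerary at each step and the way the edges of $\del K$ propagate --- and then check that the resulting equations in $\mu_2$ are genuinely nontrivial (not identically satisfied), so that their zero sets really are countable. Once the $k=0$ picture is pinned down, the perturbation in $k$ is soft.
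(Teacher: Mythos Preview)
Your approach is essentially the same as the paper's: for part (a) the paper simply cites \cite{PeGHA} while you give a direct verification, and for part (b) both you and the paper first establish (SH7) at $k=0$ for all but countably many $\mu_2$ (using that the $y$-dynamics decouples and the discontinuity/image sets are finite unions of horizontal segments), then perturb to small $|k|$ by continuity of these finitely many disjointness conditions, and finally invoke Theorem~\ref{main-result}(a). Your write-up is in fact more careful than the paper's about the range $0\le j<k_0$ and about flagging the nontriviality check for the equations in $\mu_2$; no genuine gap.
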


\begin{proof}
	The first of the above assertions is proven in \cite{PeGHA}. To prove the second, first note that \cite{PeGHA} shows that $T$ satisfying (SH3)-(SH6) admits countably many SRB measures. In general, (SH7) may fail; however, we will show that given $\lambda_2 > 0$, this can happen only for countably many choices of $\mu_2>0$. To see this, note that when $k=0$, (SH7) fails if the horizontal lines forming $N^-$ lie inside $f^{-n}(N)$ for some $n > 0$, which only happens for countably many choices of $\mu_2$. Given a pair $\lambda_2$ and $\mu_2$ so that $T$ satisfies (SH7) with $k=0$, the line segments forming $N$ and $f^j(N^-)$ do not intersect for $0 \leq j < k$, where $\max(\lambda_2, \mu_2)^k > 2$. Increasing $|k|$ will rotate these line segments; by continuity, if the change in $|k|$ is sufficiently small, these line segments will remain disjoint. So, for these choices of $\lambda_2$, $\mu_2$, and $k$, $T$ will admit finitely many SRB measures by Theorem \ref{main-result}. 
\end{proof}

\subsection{Necessity of assumptions}\label{infinite-example} The singular set $N$ may in principle have countably many components. If this is the case, then the attractor may admit infinitely many ergodic SRB measures, as the following example illustrates. 

Let $P_k = \left(-1,1\right) \times \left(2^{-k}-1, \, 2^{-(k-1)}-1\right)$ for $k \geq 0$. Then $K = I^2 = \overline{\union_k P_k}$, and $ N^1 := K \setminus \union_k P_k$ is the countable union of line segments $(-1,1) \times \{2^{-k}-1\} =: N_k^1$. 

Let $f : I^2\setminus \big( (-1,1) \times \{0\} \big)\to I_2$ be the geometric Lorenz attractor, and let $f_k : P_k \setminus \big( (-1,1) \times \left\{\frac{2^{-k-1} + 2^{-k}}{2}-1\right\}$ be given by $f_k = h_k^{-1} \circ f \circ h_k$, where $h_k : P_k \to I^2$ is the conjugacy map given by 
\[
h_k(x,y) = \left(x, 2^{k+2}(y+1)-3\right). 
\]
Now denote
\begin{align*}
N &= I^2 \setminus \left( P_k \setminus \left( (-1,1) \times \left\{\frac{2^{-k-1} + 2^{-k}}{2}-1\right\}\right)\right) \\
&= (-1,1) \times \union_{k \geq 0} \left( \left\{ 2^{-k}-1, \frac{2^{-k}+2^{-k-1}}2 - 1\right\}\right),
\end{align*}
and let $g : I^2 \setminus N \to I^2$ be given by 
\[
g(x,y) = f_k(x,y) \quad \textrm{for } (x,y) \in P_k \setminus \left( (-1,1) \times \left\{\frac{2^{-k-1} + 2^{-k}}{2}-1\right\}\right).
\]
Effectively, we have embedded the geometric Lorenz attractor into each disjoint rectangle $P_k$. The map $g$ admits a singular hyperbolic attractor, and the singular set $N$ is the disjoint union of countably many submanifolds. Since each orbit of $g$ is entirely contained in one of the rectangles $P_k$, each $P_k$ supports a distinct ergodic SRB measure. So the requirement that there are only finitely many components of the singular set $N$ is a necessary assumption for our result to hold. 

\section*{Acknowledgments} I would like to thank Penn State University and the Anatole Katok Center for Dynamical Systems and Geometry where this work was done. I also thank my advisor, Y. Pesin, for introducing me to this problem and for valuable input over the course of my investigation into singular hyperbolic attractors. I also thank S. Luzzatto for many helpful remarks concerning the connections between ergodicity and topological transitivity of singular hyperbolic attractors. 


\medskip
Received xxxx 20xx; revised xxxx 20xx.
\medskip


\begin{thebibliography}{99}

\bibitem{APLorenz} 
\newblock V. Afraimovich and Y. Pesin,
\newblock  The dimension of Lorenz type attractors,
\newblock  in \emph{Sov. Math. Phys. Rev.} 6,
Gordon and Preach: Hartwood Academic (1987).

\bibitem{ABV}
\newblock J. Alves, C. Bonatti, M. Viana,
\newblock SRB measures for partially hyperbolic systems whose central direction is mostly expanding,
\newblock \emph{Invent. Math.}, \textbf{140}(2) (2000), 351--398.

\bibitem{Asectional}
\newblock V. Araujo,
\newblock Finitely many physical measures for sectional-hyperbolic attracting sets and statistical stability,
\newblock preprint, \arXiv{1901.10537}.

\bibitem{APbook}
[10.1007/978-3-642-11414-4]
\newblock V. Araujo and M. Pacifico,
\newblock \emph{Three-Dimensional Flows},
\newblock \emph{A Series of Modern Surveys in Mathematics}, \textbf{53},  Springer-Verlag, Berlin, Heidelberg, 2010.

\bibitem{APPV}
\newblock  V. Araujo, M. Pacifico, E. Pujals, M. Viana, 
\newblock Singular-hyperbolic attractors are chaotic,
\newblock \emph{Trans. Amer. Math. Soc.}, \textbf{361}(5) (2009), 2431--2485.

\bibitem{BaPeNUH}
 [10.1017/978-1-107-32602-6]
\newblock L. Barreira and Y. Pesin,
\newblock \emph{Nonuniform Hyperbolicity: Dynamics of systems with nonzero Lyapunov exponents}, 
\newblock{Encyclopedia of Mathematics and its Applications}, \textbf{115},  Cambridge University Press, 2007.

\bibitem{Belykh}
\newblock V. Belykh,
\newblock Qualitative methods of the theory of nonlinear oscillations in point systems,
\newblock Gorki University Press (1980).

\bibitem{BV}
\newblock C. Bonatti and M. Viana,
\newblock SRB measures for partially hyperbolic systems whose central direction is mostly contracting,
\newblock \emph{Israel J. Math.}, \textbf{115} (2000), 157--193.

\bibitem{Bowen}
 [10.1007/978-3-540-77605-5]
\newblock R. Bowen,
\newblock \emph{Equilibrium states and the ergodic theory of Anosov diffeomorphisms}, 
\newblock{Lecture notes in Mathematics}, \textbf{470}, Springer-Verlag, Berlin, Heidelberg, 2008.

\bibitem{Hu99}
\newblock H. Hu, 
\newblock Conditions for the existence of SBR measures for ``almost Anosov'' diffeomorphisms,
\newblock \emph{Trans. Amer. Math. Soc.}, \textbf{352}(5) (2000), 2331--2367.


\bibitem{KL} 
	\newblock J. Kaplan and J. Yorke,
	\newblock Preturbulence: A regime observed in a fluid flow model of Lorenz.
	\newblock \emph{Commun. Math. Phys.} \textbf{67} (1979), 93--108.


\bibitem{Lozi}
	\newblock R. Lozi, 
	\newblock Un attracteur \'etrange du type attracteur de H\'enon,
	\newblock \emph{J. Phys., Paris}, \textbf{39}, Coll. C5 (1978), 9--10.
	

\bibitem{PeGHA} 
\newblock Y. Pesin, 
\newblock Dynamical systems with generalized hyperbolic attractors: hyperbolic, ergodic and topological properties, 
\newblock \emph{Ergod. Theory Dyn. Syst.}, \textbf{12} (1992), 123--151.


\bibitem{PSZ-Katok}
     \newblock  Y. Pesin, S. Senti, K. Zhang, 
     \newblock Thermodynamics of the Katok map (Revised Version), 
     \newblock \emph{Ergod. Theory Dyn. Syst.}, \textbf{41} (2021), 1864--1866.
     

\bibitem{RH2TU}
\newblock F. Rodriguez-Hertz, M. A. Rodriguez-Hertz, A. Tahzibi, R. Ures.
\newblock Uniqueness of SRB measures for transitive diffeomorphisms on surfaces, 
\newblock \emph{Commun. Math. Phys.}, \textbf{306}(1) (2011), 35--49. 


\bibitem{Sat92} 
\newblock E. Sataev,
\newblock Invariant measures for hyperbolic maps with singularities, 
\newblock \emph{Russian Math. Surveys}, \textbf{47}(1) (1992), 191--251.


\bibitem{Sat10} 
\newblock E. Sataev,
\newblock Invariant measures for singular hyperbolic attractors,
\newblock \emph{Sbornik: Mathematics}, \textbf{201}(3) (2010), 419--470.

\bibitem{me-PAD} 
\newblock D. Veconi, 
\newblock Thermodynamics of smooth models of pseudo-Anosov homeomorphisms
\newblock preprint, \arXiv{1912.09625}

\end{thebibliography}
\end{document}